\newtheorem{theorem}{Theorem}[section]
\newtheorem{definition}[theorem]{Definition}
\newtheorem{conjecture}[theorem]{Conjecture}
\newtheorem{proposition}[theorem]{Proposition}
\newtheorem{problem}[theorem]{Problem}
\newtheorem{example}[theorem]{Example}
\long\def\symbolfootnote[#1]#2{\begingroup%
\def\thefootnote{\fnsymbol{footnote}}\footnote[#1]{#2}\endgroup}
\newcommand\Z{{\mathbb Z}}
\newcommand\R{{\mathbb{R}}}
\begin{document}
\title{On Slavik Jablan's work on 4-moves}
\author{J\'ozef H. Przytycki} 

\maketitle
\markboth{\hfil{\sc 4-moves}\hfil}
\ \
\centerline{Dedicated to Memory of Slavik Jablan (1952--2015)}

\tableofcontents

{\bf Abstract}
We show that every alternating link of 2-components and 12 crossings can be reduced by 4-moves to the trivial link or the Hopf link. 
It answers the question asked in one of the last papers by Slavik Jablan.
\section{Introduction}

One of the last papers of Slavik Jablan, \cite{DJKS}, concerns the 4-move conjectures by
Nakanishi and Kawauchi \cite{Kir}. It is proved in that paper that the Nakanishi 4-move conjecture holds for knots up to 12 crossings and
Kawauchi 4-move conjecture for 2-component links holds for links up to 11 crossings and alternating links of 12 crossings with
the possible exception of the link of 12 crossings with the Conway symbol $9^*.2:.2:.2$, Figure 1.1 (or its mirror image). 
We show here that the link, which we name $12^2_{Jab}$, can be reduced to the Hopf link by 4-moves\footnote{I was kindly informed by 
Neil Hoffman and Seung Yeop Yang, that the link $12^2_{Jab}$ in SnapPy \cite{CDW} is denoted by $L12a_{1388}$.}.
 Thus every alternating link of two components and $12$ crossings can be reduced to 
the trivial link or the Hopf link by 4-moves, answering a question in \cite{DJKS}. 

We start with the survey of the story of the 4-move conjecture and complete the paper with  
a few remarks that outline the theory of quartic skein modules with a skein relation that is a deformation of a 4-move.  

\vspace*{0.3in} 
\centerline{\psfig{figure=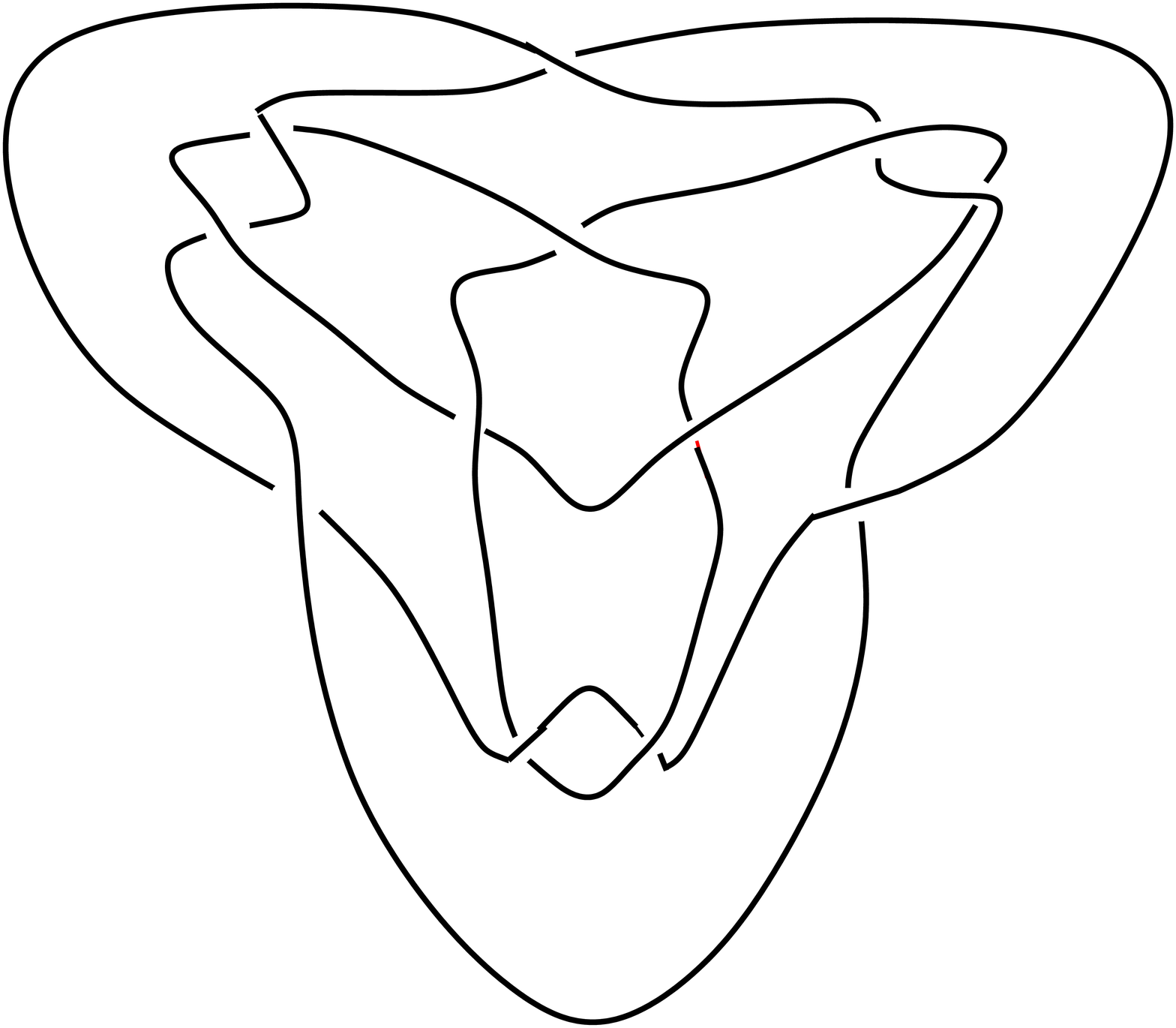,height=5.0cm}}
\begin{center}
Figure 1.1; The link $12^2_{Jab}$ of $12$ crossings, $9^*.2:.2:.2$ in Conway's notation -- to be simplified
\end{center}

\section{History of 4-move problems}
In 1979, Yasutaka Nakanishi, a graduate student at Kobe University\footnote{His advisor was Fujitsugu Hosokawa, born 1930, student of Terasaka, 
a creator of the Kansai school of knot theory. Nakanishi was also a student of 
Shin'ichi Suzuki; see \cite{Prz-5}.} 
conjectured that any knot can be reduced by 4-moves to the trivial knot. He checked the conjecture on small knots and was 
always able to find a reduction.  
Not every link can be reduced to a trivial link by 4-moves. In particular, the linking matrix modulo 2 is preserved by 4-moves. Furthermore,
Nakanishi and Suzuki, using Alexander matrices, demonstrated that the Borromean rings cannot
be reduced to the trivial link of three components \cite{N-S,Nak-4}.
In 1985, after the seminar talk given by Nakanishi in Osaka (likely  a KOOK seminar), there
was discussion about possible generalization of the Nakanishi 4-move
conjecture for links. Akio Kawauchi formulated the question for links: are they equivalent by 4-moves if they are link 
homotopic\footnote{Recall that two links $L_{1}$ and $L_{2}$ are link homotopic if $L_{2}$ can
be obtained from $L_{1}$ by a finite sequence of self-crossing changes.}?

In the following subsections I introduce the 4-move conjectures formally in the context I learned about them at the Braid conference in July, 1986.

\subsection{Braids 1986, Santa Cruz conference}
\ \\

I learned about the 4-move conjectures at the AMS Conference on ``Artin's Braid Group" at the University of California, Santa Cruz, July 1986, 
from Kunio Murasugi and Hitoshi Murakami. 
I just had come from Poland to Vancouver (as a postdoc of Dale Rolfsen) and a few days later took a Greyhound bus to California 
(Santa Cruz).  In Poland in the Spring of 1986, I was working on $t_k$-moves on links and 
so I was immediately hooked by the 4-move (and analogous 3-move) 
conjectures, see Figure 2.1. In the Proceedings of the Conference I wrote about them twice , once in my paper on $t_k$-moves \cite{Prz-1} and 
then, in almost identical words in the Problem list prepared by Hugh Morton \cite{Mor}. Below I cite the Problem B from this list.

\vspace*{0.3in} \centerline{\psfig{figure=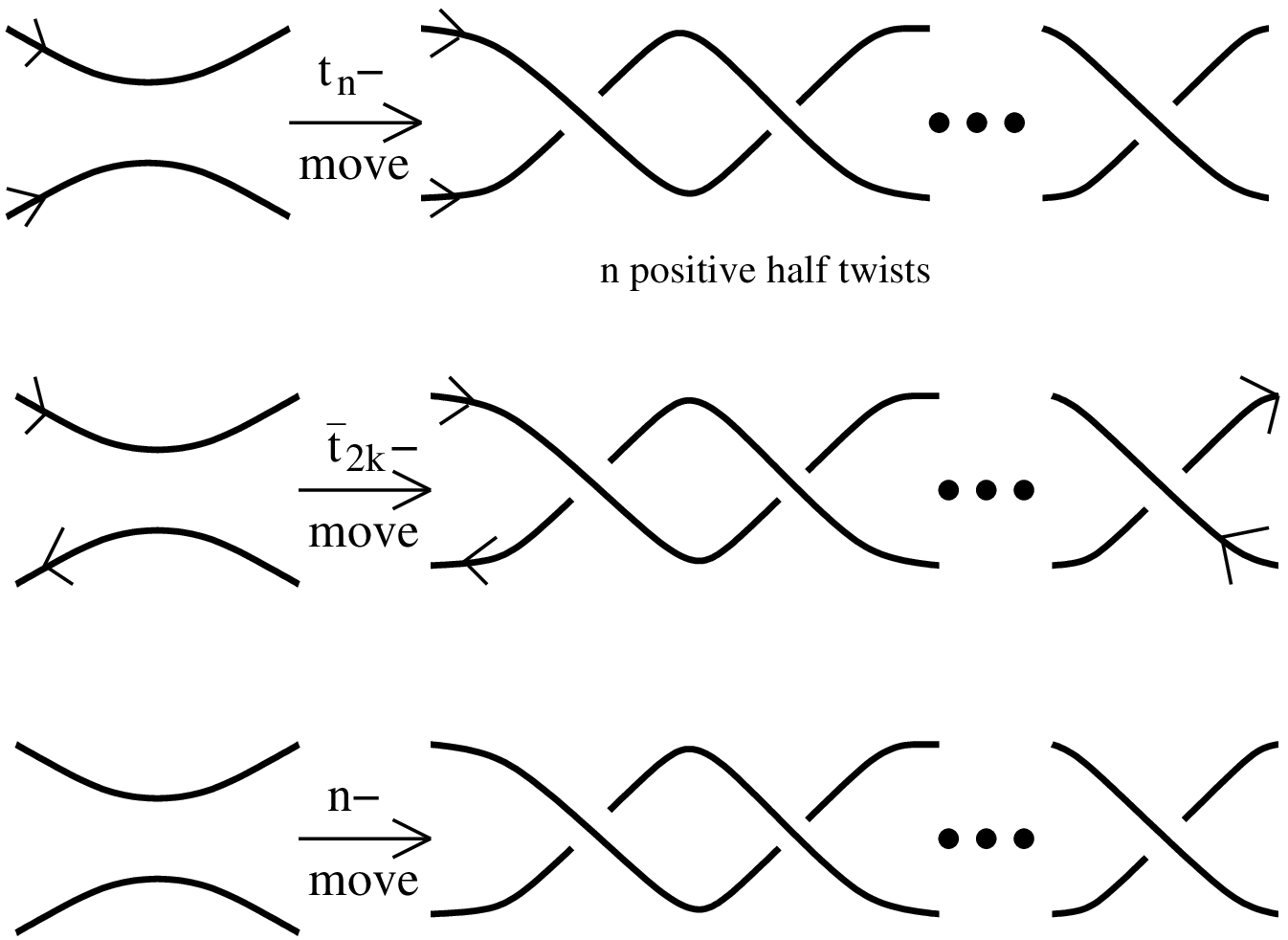,height=5.5cm}}
\begin{center}
Figure 2.1; Oriented $t_n$ and $\bar t_{2k}$-moves and unoriented $n$-move
\end{center}

We say that two links are $n$-equivalent if one can reach one from the other by a finite number of $n$-moves (or their inverses). 
In the case where we only  work with diagrams we also  have to allow  Reidemeister moves.

\begin{conjecture} (Problem B \cite{Mor}, Kawauchi, Nakanishi) If two links are [link] homotopic then they are 4-equivalent. 
In particular, any knot is 4-equivalent to the unknot.
\end{conjecture}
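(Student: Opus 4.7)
The plan is to attack the conjecture in its two stated parts separately, since both the knot case (Nakanishi) and the link-homotopy case (Kawauchi) are widely believed but open in general. For the knot case, I would begin by pushing the crossing-by-crossing verification of \cite{DJKS} beyond 12 crossings and, in parallel, attempt to identify a short list of local tangle configurations that resist 4-move simplification; proving that each such configuration can be eliminated from an arbitrary ambient diagram would reduce the statement to a finite combinatorial check.

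For the link-homotopy case, the key observation is that link-homotopy invariants such as the mod-2 linking matrix are invariants of 4-equivalence, so the conjecture asserts exactly that these are the only obstructions within a link-homotopy class. My strategy for two-component links would be induction on the crossing number, with the base cases supplied by \cite{DJKS}. The inductive step would seek a 4-move that strictly decreases a well-chosen complexity measure (e.g.\ crossing number plus an auxiliary quantity recording the tangle structure), using the mod-2 linking data to pick the next crossing in a way that is a priori compatible with the target link-homotopy class.

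The specific obstacle flagged by \cite{DJKS} is the 12-crossing link $12^2_{Jab}=9^*.2:.2:.2$ of Figure 1.1. I would attack this directly by hand, exploiting the visible three-fold symmetry of the $9^*$ polyhedron: perform three symmetrically placed 4-moves on the three $.2$ decorations and try to recognise the resulting simpler symmetric diagram among the already-reduced alternating two-component links of at most 11 crossings handled in \cite{DJKS}. A back-up plan is to perform a single well-chosen 4-move that converts one $.2$ tangle into a small rational tangle and then iterate, again feeding the intermediate diagrams into the existing Jablan reduction database.

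The hard part is not the individual 4-move sequence but \emph{finding} it: the search tree branches very rapidly, and the naive enumeration used in \cite{DJKS} already fails on $12^2_{Jab}$. The real obstruction is therefore to identify an invariant finer than the mod-2 linking number but coarser than the full isotopy class that can guide the choice at each step; a natural candidate is a Burnside-type quotient of the fundamental quandle in which the relation forces the operation $*b$ to have order dividing $4$. The plan's ultimate success hinges on being able to compute this quotient for $12^2_{Jab}$ and to verify that it agrees with the corresponding quotient for the Hopf link, thereby certifying in advance that some sequence of 4-moves between the two diagrams exists before one exhibits it explicitly.
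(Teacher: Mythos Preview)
The statement is an open \emph{conjecture}; the paper does not prove it and explicitly says so (``both conjectures are still open''). What the paper actually establishes is the single remaining case from \cite{DJKS}: the link $12^2_{Jab}$ reduces by 4-moves to the Hopf link. So your proposal should be read as a research plan, and the only part that can be compared against the paper is your attack on $12^2_{Jab}$.

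On that point your instinct is right but your expectation is wrong. Performing three symmetrically placed 4-moves on the three $.2$ decorations is exactly what the paper does (the moves $m_1,m_2,m_3$ of Figure~6.1). However, the result is \emph{not} an alternating 2-component link of at most 11 crossings that you can feed back into the \cite{DJKS} database: it is the parallel 2-cable of the trefoil. The subsequent reduction, due to Askitas, first \emph{increases} the crossing number from 12 to 19 via a framed Tait flype before anything simplifies. This is precisely why the computer search in \cite{DJKS} missed it, and it defeats any induction on crossing number (or crossing number plus a mild auxiliary term) of the kind you sketch for the general 2-component case.

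Your closing idea --- computing a Burnside-type quotient for $12^2_{Jab}$ and checking it agrees with that of the Hopf link, as a certificate that a 4-move path exists --- does not work either. Theorem~\ref{Theorem 4.7} of the paper shows that for $n=4$ the Burnside group $B_L(4)$ of any knot or 2-component link is forced to be cyclic, equal to $H_1(M_L^{(2)},\Z_4)$, which is already determined by the linking number mod~2. So the invariant you propose trivially agrees for $12^2_{Jab}$ and the Hopf link; it cannot guide the search, and conversely its agreement is no certificate that a reduction exists. Any genuinely useful guiding invariant would have to lie outside the Burnside/Fox-coloring framework.
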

I commented on the conjecture in \cite{Mor}: ``The conjecture is true for 2-bridge\footnote{I am not absolutely sure which argument I used then but 
most likely I used a argument similar to the proof of Proposition 4.3 but with smaller number of the pairs of tangles to check.}
 and pretzel links and for closed 3-braids." 
The 3-braids case is interesting because it was at the Braids conference that I learned (from Orlik via Wajnryb) of the 
Coxeter theorem which says that the braid group $B_3$ divided by fourth powers of generators is finite of $96$ elements \cite{Cox}.
One should add that Coxeter result of 1957 was not well known and was not listed in Mathematical Review.

I have been thinking about 4-move conjectures for almost 30 years so what is below are my, not always fully organized 
thoughts, about approaches to the conjectures. I think I am justified by the fact that both conjectures are still open.

\subsection{Kirby Problem list}

I spent the academic year 1994-5 in Berkeley visiting Vaughan Jones. 
Rob Kirby was then preparing the new ``Problems in Low-Dimensional Topology" list 
and in Fall of 1994 I wrote several problems including Nakanishi and Kawauchi 4-moves conjectures. I wrote:
\begin{conjecture}\label{Conjecture 3}
\begin{enumerate}\ 
\item[(a)] (Nakanishi 79) Any knot is 4-equivalent to the unknot. 
\item[(b)] (Kawauchi 85) If two links are link-homotopic then they are 4-equivalent.
\end{enumerate}
\end{conjecture}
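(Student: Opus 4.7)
Since part (a) is the case of part (b) in which one link is the unknot and link-homotopy is trivial, I would treat (b) as the target, with (a) falling out along the way. The overall scheme is an induction on bridge number (equivalently, on the plat index of a braid presentation), with the $2$-bridge case as the base and tangle replacements driving the inductive step.

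For the base case, I would prove that every rational $2$-tangle is $4$-equivalent, inside its containing ball, to one of a short explicit list of canonical tangles. Because a rational tangle $[p/q]$ is classified by its slope and $4$-moves act on slopes by a visible arithmetic rule, this should reduce to a finite computation and would immediately settle the $2$-bridge case: every $2$-bridge link is $4$-equivalent to the unknot, the Hopf link, or the $2$-component trivial link, in agreement with the linking-matrix mod $2$ obstruction. This recovers the $2$-bridge claim stated in \cite{Mor}.

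For the inductive step, present the link as the plat closure of a braid $\beta \in B_{2n}$ and attempt to decrease $n$ via $4$-moves together with plat-isotopy. The case $n=2$ would be attacked through Coxeter's theorem that $B_3/\langle \sigma_i^4\rangle$ is finite of order $96$, reducing it to a finite check and recovering the closed-$3$-braid result cited above. For $n \geq 3$ the corresponding Coxeter-type quotient is infinite, so one would have to interpolate a tangle-replacement layer that simplifies $\beta$ into a plat factoring through $B_{2n-2}$; the base case supplies the atomic moves for this layer.

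The principal obstacle is precisely the step $n \geq 3$. No nontrivial $4$-move invariant is known beyond the linking matrix modulo $2$, so there is no certificate of impossibility and the search for reductions must proceed essentially blindly. Worse, a candidate reduction typically has to \emph{increase} the crossing number (and the apparent complexity of $\beta$) before decreasing it, as in the reduction of $12^2_{Jab}$ promised in this paper, so an induction on any naive complexity measure will fail. A successful proof will almost certainly require either a new invariant that sharpens the linking-matrix mod $2$ obstruction, or a canonical normal form for $4$-equivalence classes of tangles that steers the reduction procedure monotonically through an acceptable intermediate blow-up.
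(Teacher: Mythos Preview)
The statement you are attempting to prove is a \emph{conjecture}, and the paper contains no proof of it: the author writes explicitly that ``both conjectures are still open.'' So there is no paper-proof to compare your proposal against. Your own write-up is honest about this --- you call it a ``plan of attack'' and concede that the step $n\geq 3$ is the principal obstacle with no known way through --- but it should be said plainly that what you have is a strategy with an acknowledged gap at its heart, not a proof.

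More seriously, part (b) as stated is \emph{false}, so any purported proof of it must contain an error. The paper records (Section~\ref{Section 4}) that the ``half'' $2$-cabling of the Whitehead link is link-homotopic to the trivial link of three components but is \emph{not} $4$-move equivalent to it; this was established via the Burnside groups of links in \cite{D-P-2}. Thus Kawauchi's question has a negative answer in general, and what survives as an open problem is part (a) together with the two-component case of (b) (Problem~\ref{1.10}(ii)). Your inductive scheme, which treats (b) for all links as the target, is therefore aimed at a statement already known to fail. Relatedly, your assertion that ``no nontrivial $4$-move invariant is known beyond the linking matrix modulo $2$'' is incorrect: the $n$th Burnside group $B_L(n)$ of Definition~\ref{Definition4.4} is precisely such an invariant, and it is what distinguishes the counterexamples. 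It is only for knots and two-component links that this invariant collapses (Theorem~\ref{Theorem 4.7}), which is why those cases remain open.
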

I commented as follows: ``Conjecture \ref{Conjecture 3}(a) holds for algebraic (in the Conway sense) knots and 3-braid knots.
The smallest known unsolved case of Conjecture \ref{Conjecture 3}(a) is a 2-cable knot of the trefoil (see Figure 3.4), which is a 4-braid
knot (Nakanishi 1994 \cite{Nak-3}). See  Nakanishi, 1984 \cite{Nak-1}, Nakanishi \& Suzuki, 1987, \cite{N-S}, Morton, 1988 \cite{Mor}, 
Przytycki, 1988 \cite{Prz-1}."

\begin{example}\label{Example 2.3}
Reduction of the trefoil and the figure eight knots is
illustrated in Figure 2.2.
\end{example}
\centerline{\psfig{figure=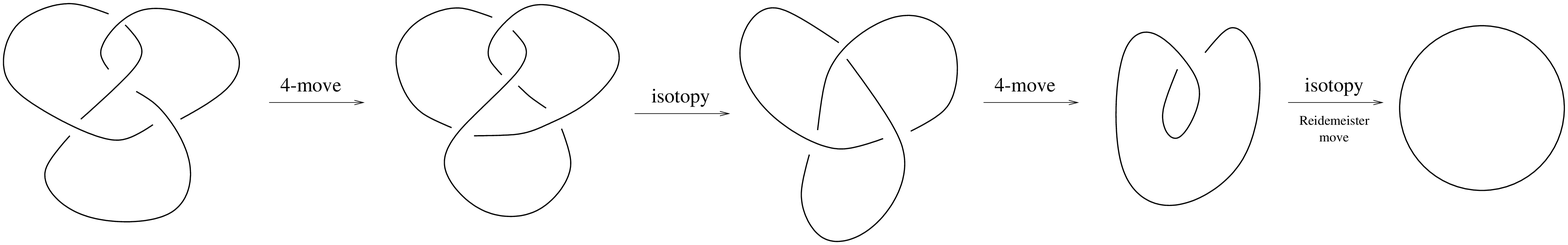,height=2.2cm}}
\centerline{Figure 2.2; basic $4$-move reductions}

In 1998 (after the Knots in Hellas conference) I learned that Nikos Askitas simplified the 2-cable of the torus knot and is proposing the 2-cable of the 
figure eight knot (of 17 crossings) as the simplest counterexample to Nakanishi 4-move conjecture \cite{Ask-1}\footnote{Askitas writes there: 
{\it  In the private conversation with Nakanishi (At the ``Knots in Hellas 98" Conference in Delphi, Greece in August of 1998) this author was told 
of the failure of various invariants in detecting counterexamples.}}. We will go back to Askitas work later 
in the paper but here we include the anecdote: Nikos solved the problem while in military service spending nights guarding some remote outposts and 
manipulating knots. It was also at the Knots in Hellas conference that I met Slavik Jablan for the first time; see \cite{Hel}.

\section{4-move conjectures for algebraic links and 3-braids}\label{Section 3} 
To illustrate early examples of approaches to 4-move conjectures I will show here how they are proved for 2-algebraic knots and links 
of two components (algebraic links in the sense of Conway). I will follow the presentation I gave in the series of talks given at the conference 
in Cuautitlan, Mexico, March 12-16, 2001\footnote{I would like to thank Zbyszek Oziewicz for organizing this amazing conference.}
  \cite{Prz-6,Prz-7,Prz-8}.\footnote{The notes from my talk were supposed to be published 
soon after the conference in the form they were on the arXiv (\cite{Prz-6}). 
Somehow the Proceedings were not published so I divided the paper into two parts and published its extended version 
in \cite{Prz-7} and \cite{Prz-8}. Finally, the original version was published in Spanish \cite{Prz-6}.} The results were first reported in \cite{Kir}.

\begin{definition}[\cite{Co,B-S}]\label{1.6}\ \\
2-algebraic tangles is the smallest family of 2-tangles which satisfies:\\
(0) Any 2-tangle with 0 or 1 crossing is 2-algebraic.\\
(1) If $A$ and $B$ are 2-algebraic tangles
    then $r^i(A)*r^j(B)$ is 2-algebraic; $r$ denotes here
the rotation of a tangle by $90^o$ angle along the $z$-axis, and * denotes
the (horizontal) composition of tangles. For example

\centerline{\psfig{figure=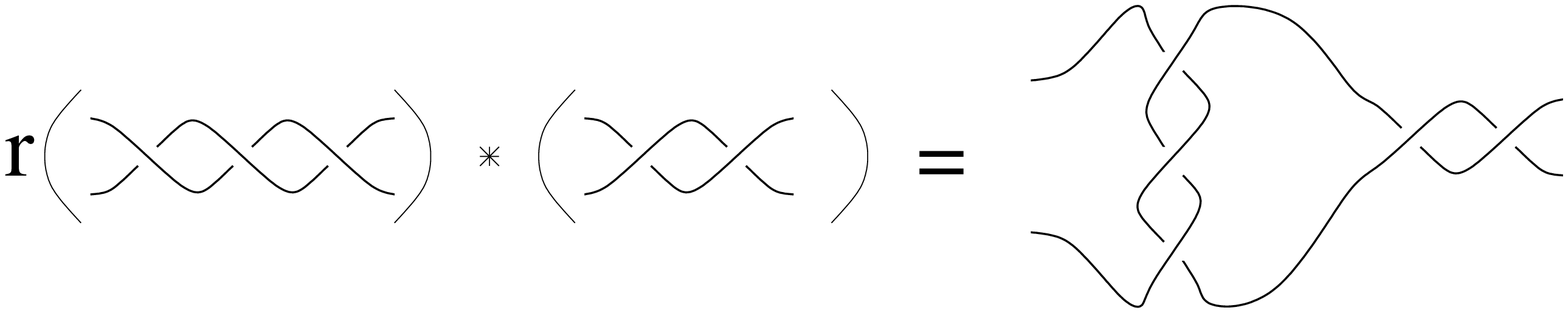,height=2.6cm}}

A link is 2-algebraic if it is obtained from a 2-algebraic tangle
by closing its ends without introducing any new crossings.
\end{definition}


\begin{proposition}[\cite{Prz-6,Prz-7}]\label{1.9}\
\begin{enumerate}
\item[(i)]
Every 2-algebraic tangle without a closed component can
be reduced by $\pm 4$-moves to one of the six basic 2-tangles
shown in Figure 3.1.
\item[(ii)]
Every 2-algebraic knot can be reduced by $\pm 4$-moves to the trivial knot.
\end{enumerate}
\end{proposition}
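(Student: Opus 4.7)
The plan is to prove both parts by induction on the number of crossings of the 2-algebraic tangle, with the inductive step amounting to a finite case-check. For the base case, any 2-algebraic tangle with at most one crossing is, by Definition 3.1(0), already among the six basic 2-tangles of Figure 3.1.

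For the inductive step of (i), let $T$ be a 2-algebraic tangle with $n\ge 2$ crossings and no closed component. By Definition 3.1(1) we may write $T=r^i(A)*r^j(B)$ with $A,B$ 2-algebraic tangles each having strictly fewer than $n$ crossings. Any closed component of $A$ or $B$ would persist in $T$, so both $A$ and $B$ inherit the ``no closed component'' hypothesis; the inductive hypothesis then yields sequences of $\pm 4$-moves reducing $A$ and $B$ to basic tangles $A_0,B_0$. Since $\pm 4$-moves inside a sub-tangle lift to $\pm 4$-moves on the whole diagram, we obtain $T\sim_4 r^i(A_0)*r^j(B_0)$.

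It then remains to verify that for every ordered pair $(A_0,B_0)$ of basic tangles and every $(i,j)\in\{0,1,2,3\}^2$, the composition $r^i(A_0)*r^j(B_0)$ can be further reduced by $\pm 4$-moves to one of the six basic tangles. Many cases collapse quickly: the $0$-tangle is a two-sided identity for $*$; the $\infty$-tangle is excluded because composing with it would create a closed component; the dihedral symmetry of a 2-tangle (a $180^\circ$ rotation, together with swapping the two factors of $*$) identifies many cases pairwise; and a vertical run of like-signed crossings reduces modulo $4$. The remaining mixed cases demand explicit diagrammatic tabulation of $\pm 4$-move sequences (often accompanied by Reidemeister moves), analogous to the strategy mentioned in the footnote to the 2-bridge case. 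This bounded but delicate case analysis is the step I expect to be the main obstacle.

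Part (ii) then follows from (i). A 2-algebraic knot is the closure, without extra crossings, of some 2-algebraic 2-tangle without closed component. By (i) that tangle is $\pm 4$-equivalent to one of the six basic 2-tangles; inspection of the closures of the diagrams in Figure 3.1 shows that each is either the unknot or a split diagram containing a separate unknotted circle. Since we started with a one-component object, the split cases are ruled out, so the closure must be the unknot.
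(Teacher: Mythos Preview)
Your proposal is essentially correct and follows the same inductive strategy as the paper: reduce the claim to a finite check that compositions of basic tangles again reduce to basic tangles, and then deduce (ii) from (i). Two points are worth noting.

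First, you plan to check $r^i(A_0)*r^j(B_0)$ for all $(i,j)\in\{0,1,2,3\}^2$, which multiplies the case load by $16$. The paper avoids this: the six basic tangles are chosen so that the family is invariant under $r$, hence $r^i(A_0)$ and $r^j(B_0)$ are themselves basic tangles, and a single $6\times 6$ multiplication table (Figure~3.2) suffices. This is the key organizational device you are missing, and it is exactly what makes the ``bounded but delicate case analysis'' you anticipate quite short.

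Second, your shortcut ``the $\infty$-tangle is excluded because composing with it would create a closed component'' is not correct in general: horizontal composition with the $\infty$-tangle need not produce a closed component (e.g.\ $e_2*e_2$ has none). The paper does not exclude any basic tangle a priori; rather, it allows that some entries of the multiplication table may contain a closed component, and those entries are simply irrelevant under the hypothesis of (i). So drop that exclusion and just run through the full table; the honest case-check is what the paper does, and with the $r$-invariance observation it is small.
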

\ \\
\centerline{\psfig{figure=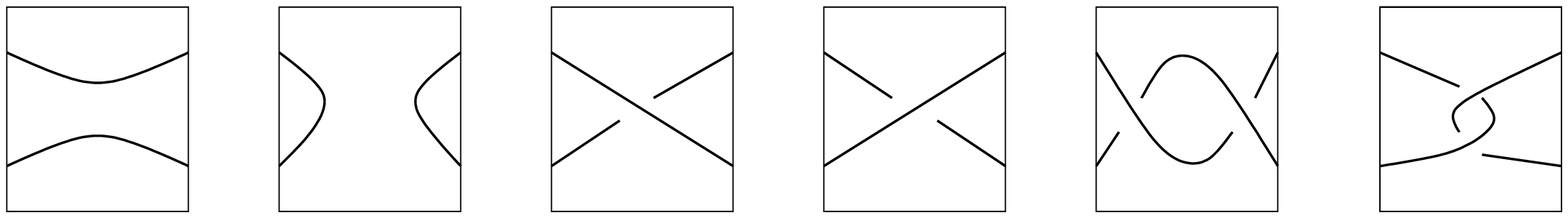,height=1.8cm}}
\centerline{\ \ \  $e_1$\ \ \ \ \ \ \ \ \ \ \ \
 $e_2$ \ \ \ \ \ \ \ \ \ \ \ \
 $e_3$\ \ \ \ \ \ \ \ \ \ \ \ \
$e_4$\ \ \ \ \ \ \ \ \ \ \ \ \ \
$e_5$\ \ \  \ \ \ \ \ \ \ \ \ \ $e_6$\ \ \  }
\centerline{Figure 3.1}
\ \\

\begin{proof}
To prove (i) it suffices to show that every composition (with possible rotation)
of tangles presented in Figure 3.1 can be reduced by $\pm 4$-moves back
to one of the tangles in Figure 3.1 or it has a closed component.
These can be easily verified by inspection. Figure 3.2 is the
multiplication table for basic tangles. We have chosen our basic tangles
to be invariant under the rotation $r$, so it suffices to be able to reduce
every tangle of the table to a basic tangle.
One example of such a reduction is shown in Figure 3.3. \
Part (ii) follows from (i).
\end{proof}
\ \\
\centerline{\psfig{figure=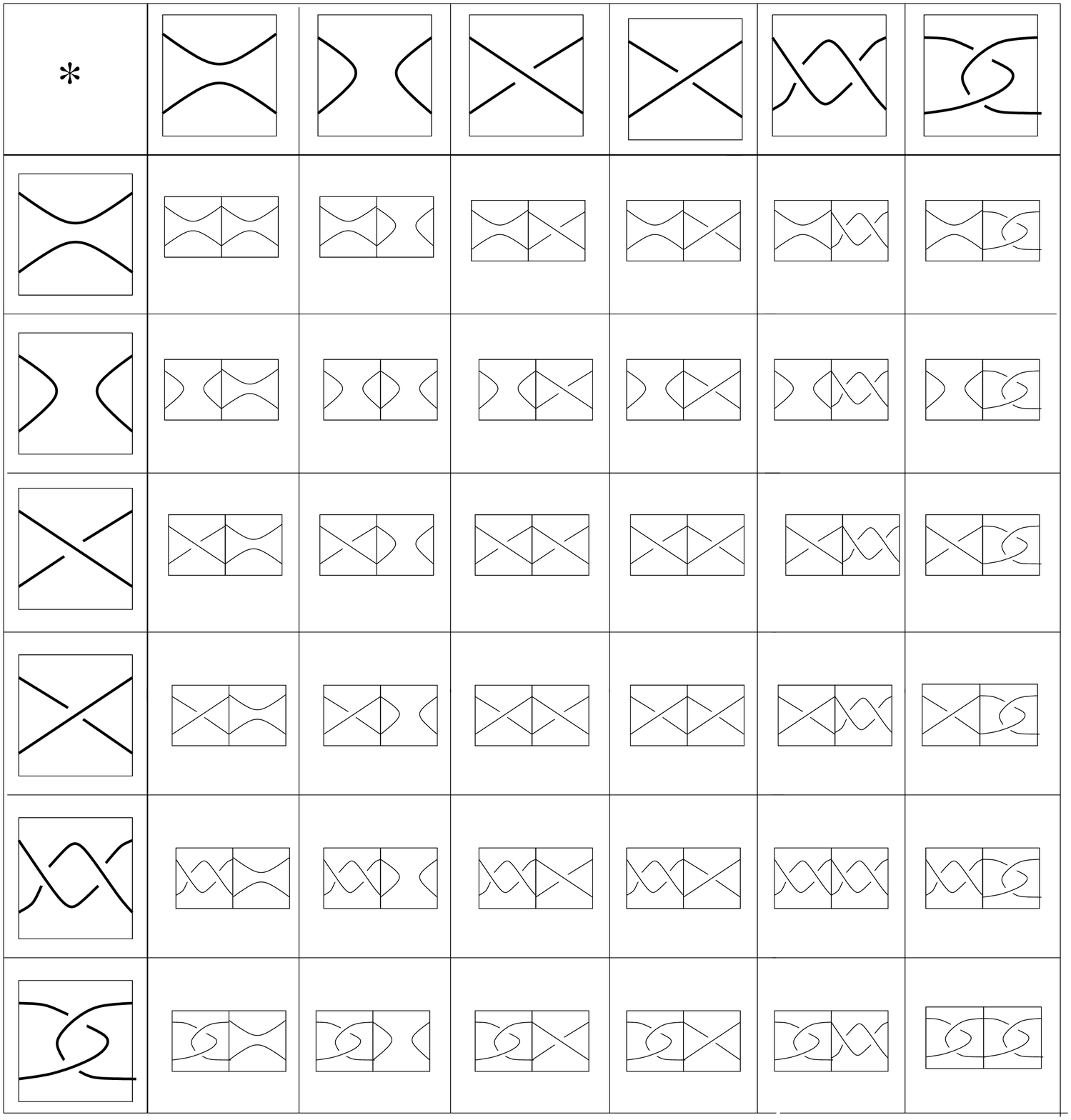,height=8.9cm}}
\centerline{Figure 3.2; tangle multiplication}

\ \\
\centerline{\psfig{figure=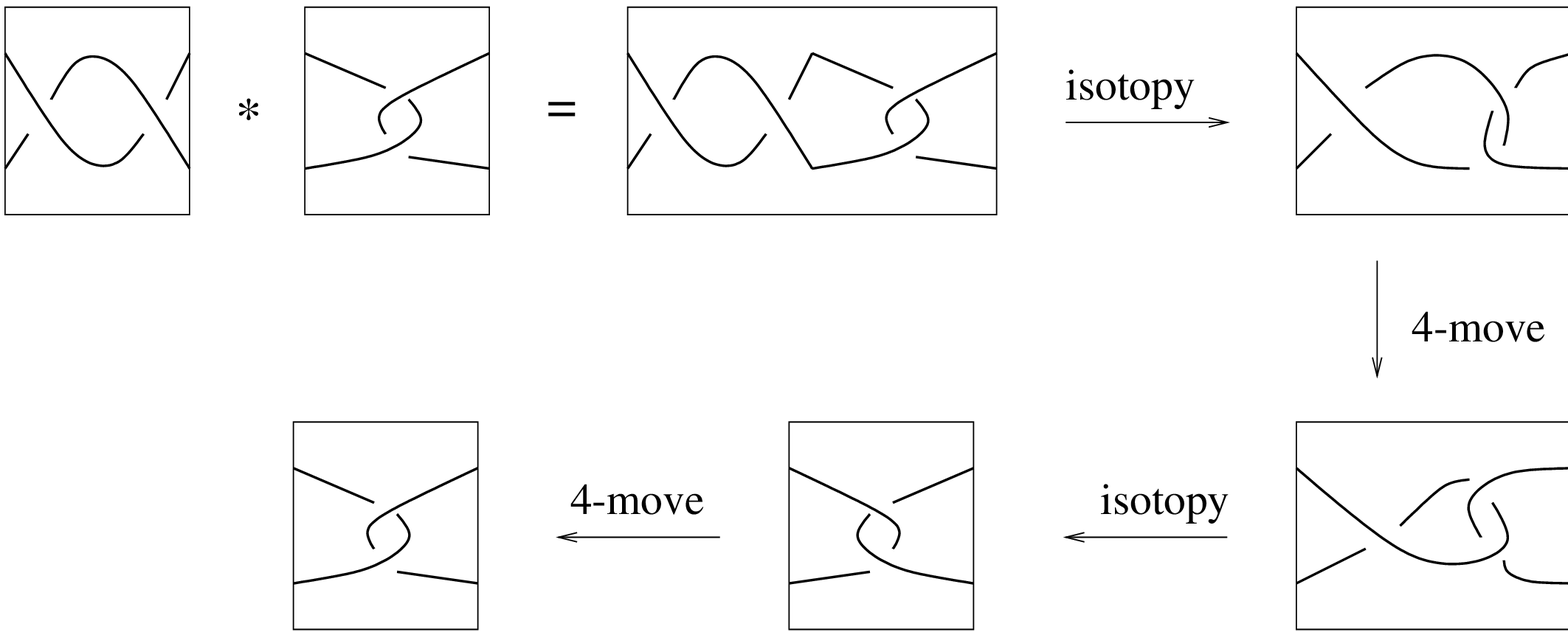,height=3.6cm}}
\centerline{Figure 3.3}
\ \\

As mentioned before, in 1994, Nakanishi became suspicious that a $2$-cable
of the trefoil knot, Figure 3.4, cannot be simplified by 4-moves \cite{Kir}.
However, Nikos Askitas was able to simplify this knot \cite{Ask-1}.

\ \\
\centerline{\psfig{figure=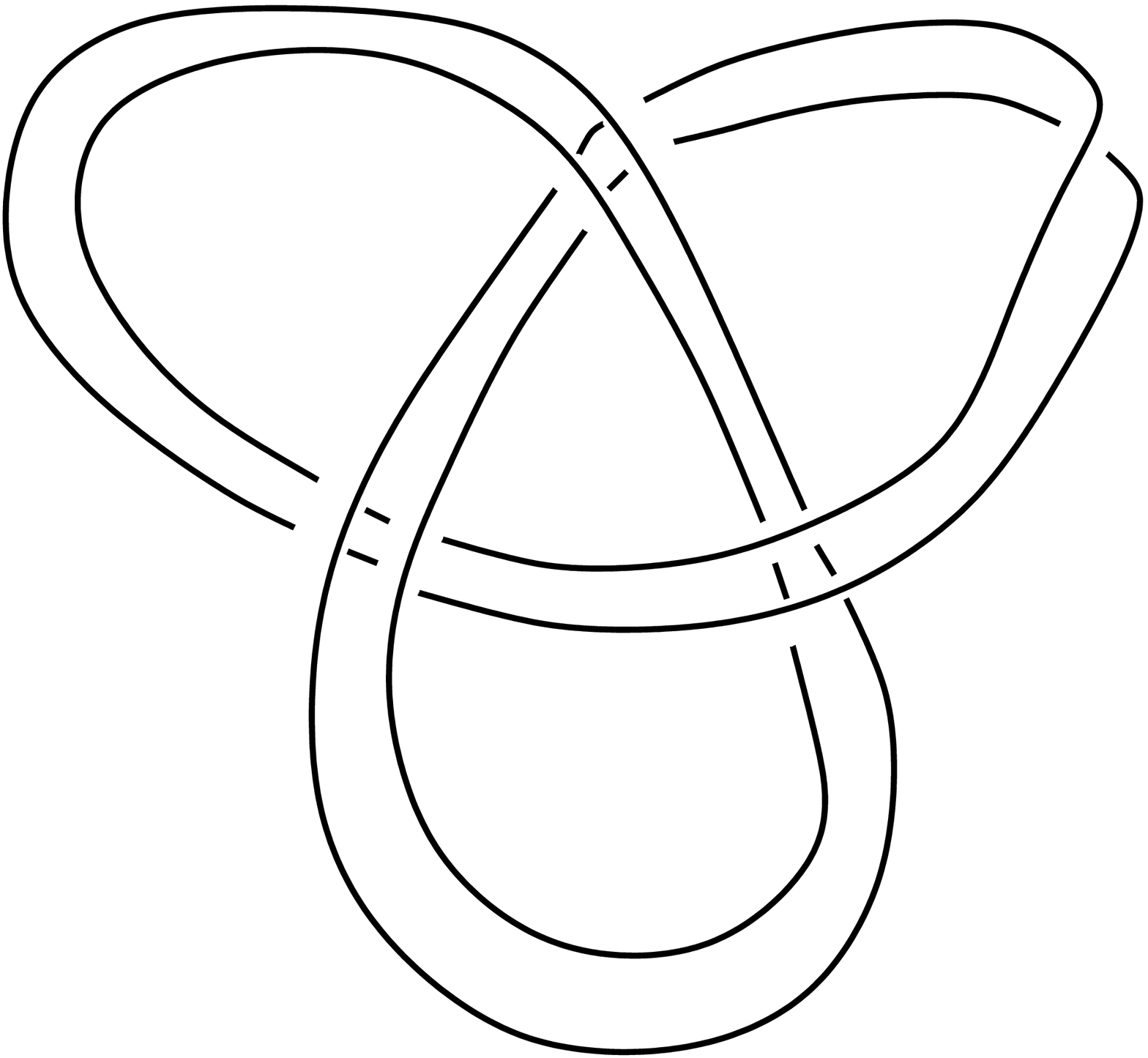,height=4.6cm}}
\centerline{Figure 3.4; 2-cable of the right handed trefoil knot}
\ \\

Askitas, in turn,
suspects that the $(2,1)$-cable of the figure eight knot
(with $17$ crossings) to be the simplest counterexample to the
Nakanishi $4$-move conjecture\footnote{While guarding some remote post, Nikos had time on his hands to draw pictures and apparently 
among them were reductions of 2-cables of $(5,2),(7,2),(9,2)$, and $(11,2)$ torus knots. Rob Todd, with his student Andrew Tew were 
trying to reduce the $2$-cable of the $(5,2)$ torus knot but without a luck 
(the information is given in an undergraduate research project of Andrew Tew).}.

Not every link can be reduced to a trivial link by 4-moves.
In particular, the linking matrix modulo 2 is preserved by $4$-moves.
Furthermore, Nakanishi and Suzuki demonstrated that the
Borromean rings cannot be reduced to the trivial link
of three components \cite{N-S,Nak-4}.

In 1985, after the seminar talk given by Nakanishi in Osaka,
there was discussion about possible
generalizations of the Nakanishi 4-move conjecture for links.
Akio Kawauchi formulated the following question for links:
\begin{problem}[\cite{Kir}]\label{1.10}\
\begin{enumerate}
\item[\textup{(i)}]
Is it true
that if two links are link-homotopic\footnote{Two links $L_1$ and $L_2$ are
{\it link-homotopic} if one can obtain $L_2$ from $L_1$ by a finite
number of crossing changes involving only self-crossings of the components.}
then they are 4-move equivalent?
\item[\textup{(ii)}] In particular, is it true
that every two-component link is 4-move equivalent to the trivial link
of two components or to the Hopf link?
\end{enumerate}
\end{problem}
We can extend the  argument used in Proposition 3.2 to show:
\begin{theorem}\label{1.11}
Any two component 2-algebraic link is 4-move equivalent
to the trivial link of two components or to the Hopf link.
\end{theorem}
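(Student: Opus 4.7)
The plan is to mimic the proof of Proposition \ref{1.9}, enhanced with bookkeeping for closed components. A 2-algebraic link $L$ of two components is the closure of a 2-algebraic 2-tangle $T$ which either (a) has no closed component, in which case its numerator or denominator closure produces the two components, or (b) has exactly one closed component, whose four-ended part closes up to contribute the second component.

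In case (a), Proposition \ref{1.9}(i) already reduces $T$ by $\pm 4$-moves to one of the six basic tangles $e_1,\ldots,e_6$. A direct inspection of the closures shows that among these, the two-component closures are, up to $\pm 4$-moves, either the trivial 2-component link or the Hopf link (the ``basic $4$-move reductions'' of Example \ref{Example 2.3} take care of whichever closures are superficially more complicated).

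In case (b), the goal is to prove an auxiliary classification: every 2-algebraic 2-tangle with one closed component reduces by $\pm 4$-moves to one of a short finite list of \emph{decorated} basic tangles $f_1,\ldots,f_m$, namely an $e_i$ together with a split unknot, or an $e_i$ together with an unknotted meridional loop around one of its strands, or an unknot linking two strands in a Hopf-band fashion. To establish this, I would redo the multiplication table of Figure 3.2, this time allowing one of the factors $A$, $B$ to carry a closed component, or allowing the composition $r^i(A) * r^j(B)$ itself to join a pair of endpoints into a new closed loop. As in Proposition \ref{1.9}, the verification that each enlarged entry reduces by $\pm 4$-moves back into the list $\{f_1,\ldots,f_m\}$ is an inspection backed by the basic reductions of Example \ref{Example 2.3}. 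Taking closures of the $f_\ell$'s then gives the trivial 2-link or the Hopf link after a last application of the ``absorb a meridional unknot'' $\pm 4$-move identity.

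The main obstacle is keeping the enlarged multiplication table finite and manageable: the extra closed component can, a priori, interact with the $4$-move simplifications used in the proof of Proposition \ref{1.9}. I would handle this with a normalization lemma showing that an unknotted closed component can always be isotoped into one of a few canonical local positions relative to the four endpoints of the ambient basic $e_i$, so that it does not obstruct the standard reductions and is ``carried along'' by them. Granted this lemma, Theorem \ref{1.11} reduces to a finite (if tedious) case analysis entirely parallel to the one in Proposition \ref{1.9}, together with the case-by-case inspection of the closures of the $f_\ell$.
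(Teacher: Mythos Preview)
Your case (a) is fine and matches the paper. In case (b), however, you take a substantially harder road than necessary, and the ``normalization lemma'' you would need is precisely the hard part --- you have not shown that a closed loop can always be pushed into one of finitely many canonical positions compatible with the $4$-move reductions, and there is no reason to expect this is easy. So as written this is a genuine gap.

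The paper bypasses the entire difficulty with one structural observation. In the inductive construction of the 2-algebraic link $L$, locate the \emph{first} subtangle $T$ in which a closed component appears. Two things follow immediately. First, the complementary tangle $T'$ (the rest of $L$ outside $T$) is a 2-algebraic tangle with \emph{no} closed component, since $L$ has only two components in total; hence by Proposition~\ref{1.9}(i) it reduces to some basic $e_i$. Second, $T$ itself is a product $r^p(A)*r^q(B)$ where $A$ and $B$ have no closed components, so $A$ and $B$ already reduce to basic tangles; inspecting the multiplication table of Figure~3.2, the \emph{only} product of basic tangles that produces a closed component is $e_6*e_6$. Therefore $L$ is $4$-move equivalent to a closure of $(e_6*e_6)*e_i$ for some $i\in\{1,\dots,6\}$, and one is left with six explicit pictures (Figure~3.5) to check by hand.

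The point is that the closed component never needs to be ``carried along'' through any reduction: it is created at the very last step, from tangles that have already been simplified. This collapses your proposed enlarged multiplication table and normalization lemma to a single row of six tangles.
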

\begin{proof}
Let $L$ be a 2-algebraic link of two components.
Therefore, $L$ is built inductively as in Definition 3.1.  Consider the first
tangle, $T$, in the construction, which has a closed component
(if it happens). The complement $T'$ of $T$ in the link $L$ is also
a 2-algebraic tangle but without a closed component. Therefore it
can be reduced to one of the $6$ basic tangles shown in
 Figure 3.1, say $e_i$.
Consider the product $T*e_i$. The only nontrivial tangle $T$ to be
 considered is $e_6*e_6$ (the last tangle in Figure 3.2).
The compositions $(e_6*e_6)*e_i$ are illustrated in Figure 3.5.
The closure of each of these product tangles (the numerator
or the denominator)
 has to have two components because it is $4$-move equivalent to $L$ of no more than two components.
We can easily check that it reduces to the trivial link of two components.
\end{proof}
\ \\

\centerline{\psfig{figure=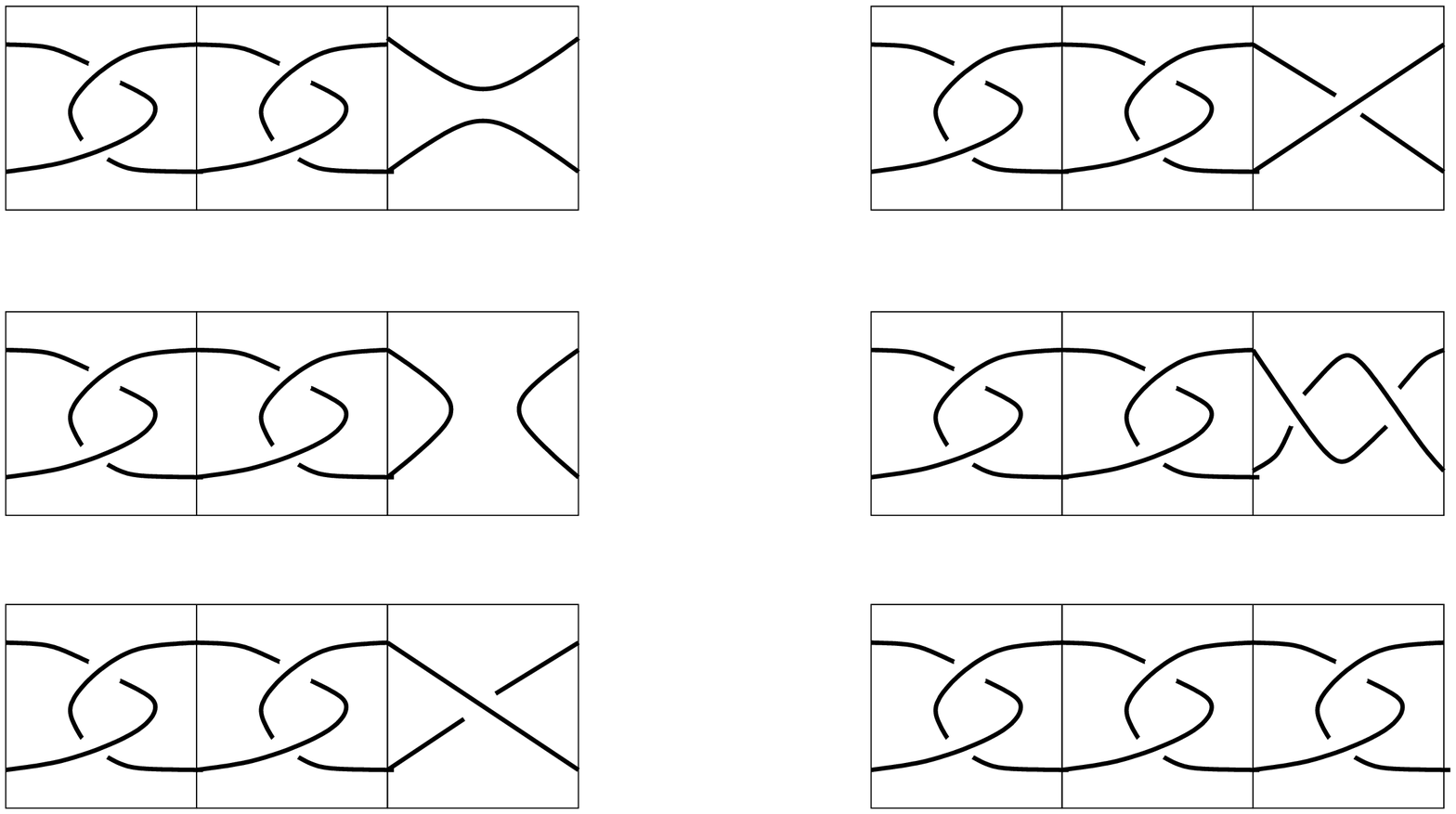,height=4.8cm}}
\centerline{Figure 3.5}
Nakanishi \cite{Nak-2} pointed out that  the ``half"
2-cabling of the Whitehead link, ${\mathcal W}$, Figure 3.6,
was the simplest link which he could not reduce to a trivial link
 by $\pm 4$-moves and is link-homotopic to a trivial
link\footnote{In fact, in  June of 2002
we showed that this example cannot be reduced by $\pm 4$-moves
 \cite{D-P-2}; see Section 4.}.
\ \\
\ \\
\centerline{\psfig{figure=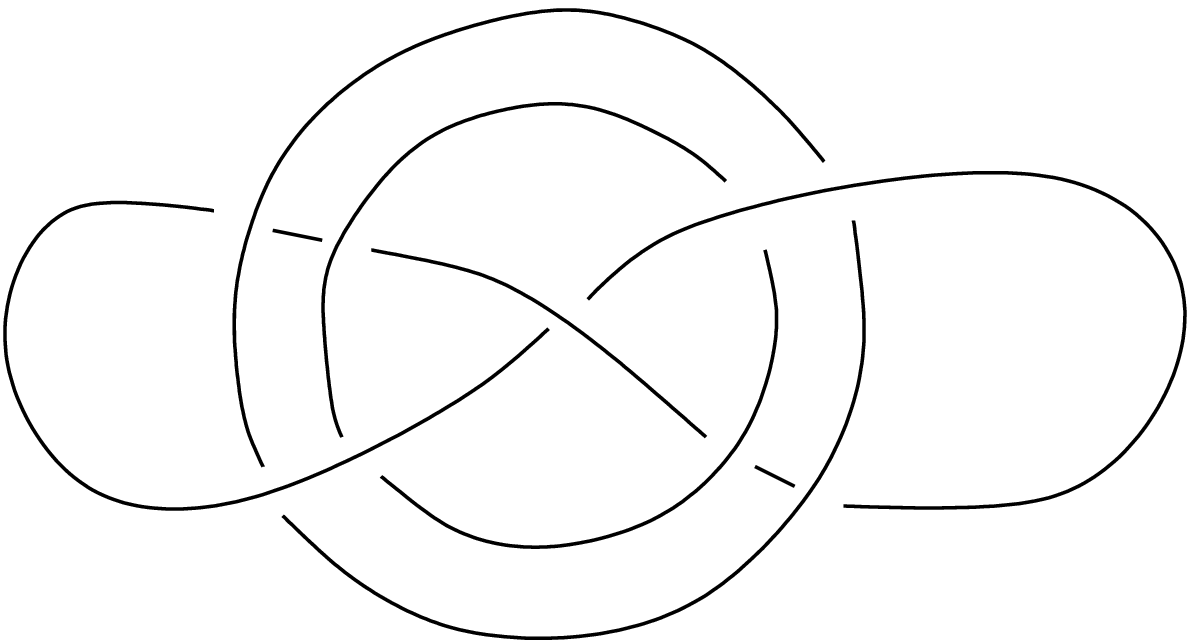,height=4.8cm}}
\centerline{Figure 3.6; the ``half" 2-cabling of the Whitehead link}\ \\
\ \\
It is shown in Figure 3.6 that the link ${\mathcal W}$ is 2-algebraic.
Similarly, the Borromean rings, $BR$, are 2-algebraic,
Figure 3.7.   \\
\ \\
\centerline{\psfig{figure=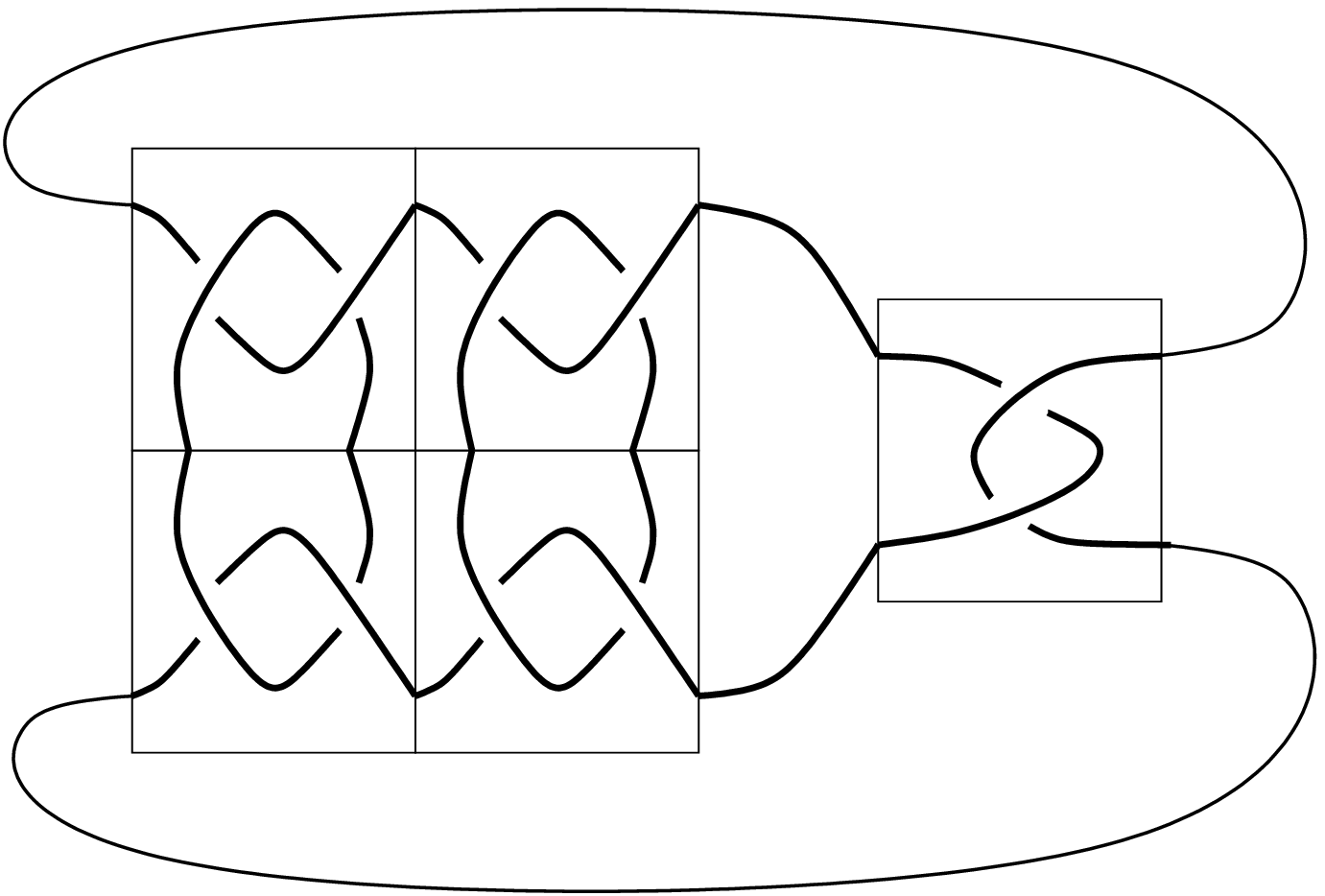,height=5.3cm}}
\centerline{Figure 3.7; ${\mathcal W}=
N(r(r(e_3*e_3)*r(e_4*e_4))*r(r(e_3*e_3)*r(e_4*e_4))*r(e_3*e_3))$
}

\ \\
\centerline{\psfig{figure=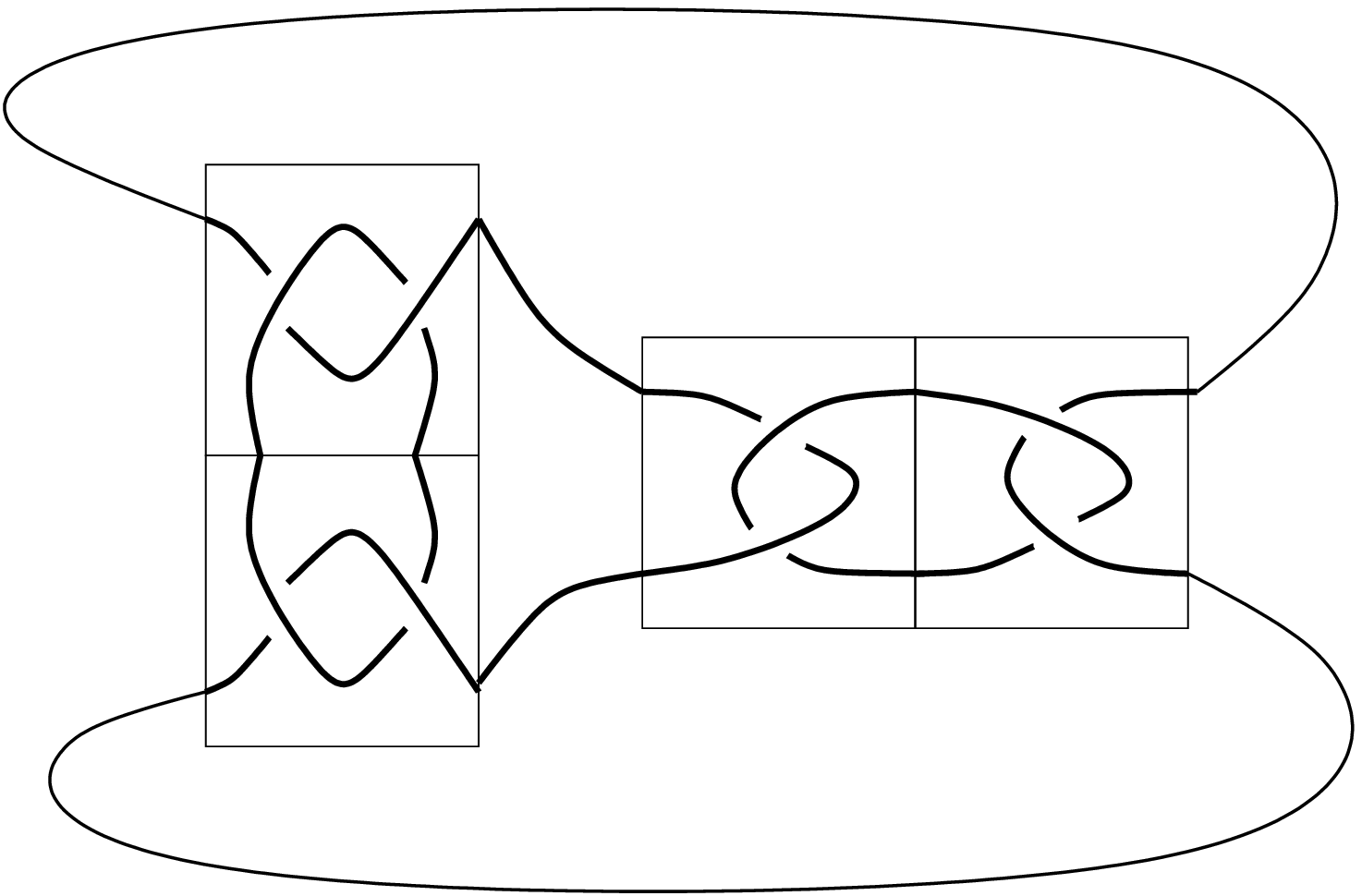,height=5.3cm}}
\centerline{Figure 3.8;
$BR= N(r(r(e_3*e_3)*r(e_4*e_4))*r(e_3*e_3)*r(e_4*e_4))$}
\ \\
\ \\

\begin{problem}\label{1.13}
Is the link ${\mathcal W}$ the only 2-algebraic link of three components (up
to 4-move equivalence) which
is homotopically trivial but which is not 4-move equivalent to the
trivial link of three components?
\end{problem}

We can also prove that the answer to Kawauchi's question is affirmative
for closed 3-braids.
\begin{theorem}\label{1.14}
\begin{enumerate}
\item[(a)] Every knot which is a closed 3-braid is
4-move equivalent to the trivial knot.
\item[(b)] Every link of two components which is
a closed 3-braid is 4-move equivalent to the trivial link of two
 components or to the Hopf link.
\item[(c)]
Every link of three components which is a closed
3-braid is $4$-move equivalent to one of the following:
\begin{enumerate} 
\item[(i)] The trivial link of three components, or 
\item[(ii)] the Hopf link with the additional trivial component, or 
\item[(iii)] the connected sum of two Hopf links, or 
\item[(iv)] the $(3,3)$-torus link, $\bar 6^3_{1}$, represented in the braid group $B_3$
by $(\sigma_1\sigma_2)^3$  (all linking numbers are equal to 1), or 
\item[(v)] the Borromean rings (represented by $(\sigma_1\sigma_2^{-1})^3$).
\end{enumerate}
\end{enumerate}
\end{theorem}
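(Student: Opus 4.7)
The plan is to exploit Coxeter's theorem cited in Section~2.1: the quotient $G_{96}=B_3/\langle\langle \sigma_1^4\rangle\rangle$ is a finite group of order $96$. Inserting or deleting $\sigma_i^{\pm 4}$ in the word $\beta$ produces or removes four parallel crossings in the closure $\widehat\beta$, which is exactly a single $4$-move; Markov conjugation $\beta\mapsto\gamma\beta\gamma^{-1}$ does not change $\widehat\beta$ at all. Hence the $4$-move equivalence class of $\widehat\beta$ depends only on the conjugacy class of the image $[\beta]\in G_{96}$, and the theorem reduces to inspecting finitely many braid words, one per conjugacy class.

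The canonical projection $B_3\to S_3$ factors through $G_{96}$, so the conjugacy classes split according to the cycle type of the associated permutation: $3$-cycles for case (a), transpositions for case (b), and the identity for case (c). In each case one works through the relevant conjugacy classes of $G_{96}$, picks a representative, and uses explicit $4$-moves combined with Markov destabilization to reduce to the canonical links listed. For (a) every representative reduces to the unknot; for (b) every representative reduces either to the trivial two-component link or to the Hopf link, and the mod-$2$ linking matrix (a $4$-move invariant) certifies that both really do occur and are not redundant.

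The main obstacle is part (c): the $16$ pure braids in $G_{96}$ must be partitioned into exactly the five listed $4$-move classes. The mod-$2$ linking matrix is a $3\times 3$ off-diagonal symmetric matrix over $\Z/2$ and separates four bins by the number of non-zero entries. One non-zero entry corresponds to Hopf $\sqcup$ unknot, two to $\text{Hopf}\#\text{Hopf}$, and three to the $(3,3)$-torus link $\widehat{(\sigma_1\sigma_2)^3}$. The remaining bin, zero non-zero entries, contains both the trivial $3$-component link and the Borromean rings $\widehat{(\sigma_1\sigma_2^{-1})^3}$; distinguishing these two is precisely the Nakanishi--Suzuki Alexander-matrix obstruction~\cite{N-S,Nak-4}. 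What remains is to exhibit, for each of the other pure-braid representatives, an explicit sequence of $4$-moves reducing it to one of the five canonical links --- a finite but genuinely tedious case analysis, which is the real computational heart of the proof.
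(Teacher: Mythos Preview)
Your approach is the same as the paper's: reduce via Coxeter's theorem to conjugacy classes of $G_{96}=B_3/(\sigma_i^4)$ and then inspect representatives. The paper carries this out concretely: it records (via a GAP computation) that $G_{96}$ has exactly $16$ conjugacy classes, lists explicit representatives, and identifies the closure of each one directly. Nine of the sixteen are trivial links, two give the Hopf link, and the remaining five are the three-component links in (c); there is no lengthy case analysis beyond recognising these closures.

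Two small points are worth correcting. First, you conflate two different ``$16$''s: the kernel of $G_{96}\to S_3$ has $16$ \emph{elements}, but these fall into only six conjugacy classes of $G_{96}$, so the case analysis for (c) is much lighter than you suggest. Second, your invocation of the mod-$2$ linking matrix and the Nakanishi--Suzuki obstruction is aimed at proving that the five links in (c) are pairwise $4$-move inequivalent; but the theorem as stated only asserts that every closed $3$-braid reduces to \emph{one} of them, not that the list is irredundant. The paper's proof accordingly does not address distinctness at all. Your additions are correct and interesting, but they answer a question the theorem does not ask.
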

\begin{proof}
Our proof is based on
the Coxeter theorem that the quotient group $B_3/(\sigma_i^4)$ is
finite with $96$ elements, \cite{Cox}.
Furthermore, $B_3/(\sigma_i^4)$ has 16 conjugacy classes\footnote{$Id,\sigma_1,
\sigma_1^{-1},\sigma_1^{2}, \sigma_1\sigma_2,\sigma_1^{-1}\sigma_2,
\sigma_1^{-1}\sigma_2^{-1},
\sigma_1^{2}\sigma_2, \sigma_1^{2}\sigma_2^{-1}, \sigma_1^{2}\sigma_2^{2},
\sigma_1\sigma_2^{-1}\sigma_1\sigma_2^{-1}, 
\sigma_1\sigma_2^{2}\sigma_1\sigma_2^{-1}, \\
\sigma_1\sigma_2^{-1}\sigma_1^{2}\sigma_2^{-1},
\sigma_1\sigma_2^{2}\sigma_1\sigma_2^{2},
\sigma_1^{-1}\sigma_2^{2}\sigma_1^{-1}\sigma_2^{2},
(\sigma_1\sigma_2^{-1})^3$ \ (checked by M.~D{\c a}bkowski
using the {\it GAP} program).}:
9 of them can be  easily identified as representing trivial links (up to
$4$-move equivalence), and
2 of them represent the Hopf link
($\sigma_1^{2}\sigma_2$ and  $\sigma_1^{2}\sigma_2^{-1}$),
and $\sigma_1^{2}$ represents the Hopf link with an additional
trivial component.
We also have the connected sums of
Hopf links ($\sigma_1^{2}\sigma_2^{2}$).
Finally, we are left with two representatives of the
link $\bar 6^3_{1}$ ($\sigma_1\sigma_2^{2}\sigma_1\sigma_2^{2}$ and
$\sigma_1^{-1}\sigma_2^{2}\sigma_1^{-1}\sigma_2^{2}$)
 and the Borromean rings.
\end{proof}
Proposition 3.2 and Theorems 3.4, and 3.6 can be used to analyze
$4$-move equivalence classes of links with small number of crossings.
\begin{theorem}\label{Theorem 3.7}
\begin{enumerate}
\item[\textup{(i)}]
Every knot of no more than $9$ crossings is $4$ move equivalent
to the trivial knot.
\item[\textup{(ii)}] Every two component link of no more than $9$ crossings
is 4-move equivalent to the trivial link of two components or to
the Hopf link.
\end{enumerate}
\end{theorem}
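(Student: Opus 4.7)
The plan is to proceed by case-by-case inspection of the standard tables of prime knots and prime $2$-component links of at most $9$ crossings, showing that each such link falls into one of the two classes for which we have already proved the statement: either it is $2$-algebraic in the sense of Definition 3.1, in which case Proposition 3.2(ii) gives part (i) and Theorem 3.4 gives part (ii); or it is the closure of a $3$-braid, in which case Theorem 3.6 applies to both parts.

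Composite and split links would be disposed of first. A $\pm 4$-move takes place inside a $2$-tangle disk, so it localizes in a connect-summand, and each prime factor of a composite link can be reduced independently. Thus a composite knot of $\le 9$ crossings reduces to a connected sum of unknots, hence to the unknot, and a composite $2$-component link reduces to a connected sum of trivial or Hopf links of strictly smaller crossing number, which is again trivial or Hopf (summing the Hopf link with an unknot gives back the Hopf link). Split links are handled componentwise. For the prime case I would go entry by entry through the Rolfsen/Conway table. Virtually every prime knot and $2$-component link of $\le 9$ crossings is algebraic in Conway's sense, and its Conway notation can be rewritten directly as a $2$-algebraic presentation built from rational tangles by products $*$ and rotations $r$. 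The remaining entries --- a handful of examples such as $8_{18}$ and certain torus-type closed braids --- are naturally closures of $3$-braids, so Theorem 3.6 applies via the $96$-element Coxeter quotient $B_3/(\sigma_i^4)$. The $\mathbb Z/2$ linking number then distinguishes the two targets in part (ii), consistently with the dichotomies already supplied by Theorem 3.4 and Theorem 3.6(b).

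The principal obstacle is bookkeeping rather than conceptual novelty: for each of the roughly one hundred tabulated entries of $\le 9$ crossings one must identify a concrete $2$-algebraic decomposition or a concrete $3$-braid word. For the few non-rational entries whose Conway symbol is not transparently $2$-algebraic --- in practice those built on the basic polyhedron $6^*$ --- one either unfolds the Conway notation into an honest $2$-algebraic expression via rotations and products of rational tangles (Definition 3.1), or exhibits an explicit $3$-braid representative. Once such a presentation is in hand for every table entry, the conclusion follows mechanically from Proposition 3.2, Theorem 3.4 and Theorem 3.6 as above.
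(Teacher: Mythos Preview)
Your strategy matches the paper's for part~(ii): the only $2$-component links of at most $9$ crossings that are not $2$-algebraic are $9^2_{40}$, $9^2_{41}$, $9^2_{42}$ and $9^2_{61}$, and each of these is indeed a closed $3$-braid, so Theorem~3.4 and Theorem~3.6(b) finish the job exactly as you describe.

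For part~(i), however, there is a genuine gap. Your argument rests on the claim that every prime knot of at most $9$ crossings is either $2$-algebraic or a closed $3$-braid. This is false: the six knots $9_{34}$, $9_{39}$, $9_{40}$, $9_{41}$, $9_{47}$ and $9_{49}$ are neither $2$-algebraic nor closed $3$-braids (several are built on the basic polyhedra $8^*$ and $9^*$, not just $6^*$, and all have braid index at least~$4$). So neither Proposition~3.2(ii) nor Theorem~3.6(a) applies to them, and your sentence ``one either unfolds the Conway notation into an honest $2$-algebraic expression \dots\ or exhibits an explicit $3$-braid representative'' simply cannot be carried out for these entries.

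The paper closes this gap not by any general principle but by exhibiting, for each of these six knots, an explicit sequence of $\pm 4$-moves reducing it to the unknot. These reductions were found by hand (three in a seminar, $9_{40}$ by Jablan and Sazdanovic, and $9_{34}$, $9_{47}$ by Niebrzydowski; the last two are drawn out in Figure~3.9). In other words, the ``bookkeeping'' you anticipate is not the obstacle here; the obstacle is that the two structural theorems you invoke do not cover the whole table, and the residual cases require ad hoc diagrammatic work.
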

\begin{proof}
Part (ii) follows immediately as the only 2-component links
with up to 9 crossings which are not 2-algebraic are
$9^2_{40}, 9^2_{41}, 9^2_{42}$ and $9^2_{61}$ and all these links
are closed 3-braids.
There are at most $6$ knots with up to 9 crossings which are
neither 2-algebraic nor 3-braid knots. They are:
$9_{34},9_{39}, 9_{40}, 9_{41}, 9_{47}$ and $9_{49}$.
We reduced three of them, $9_{39},9_{41}$ and $9_{49}$ at my Fall 2003
Dean's Seminar. The knot $9_{40}$ was reduced in December of
2003 by Slavik Jablan and Radmila Sazdanovic\footnote{They visited me in Washington participating in Knots in Washington XVII conference, 
December 19-21, 2003.}.
Soon after, my student Maciej Niebrzydowski simplified the remaining
pair $9_{34}$ and $9_{47}$; the last reduction is illustrated in Figure 3.9.
\end{proof}
As we see already in 2003 Slavik Jablan become involved in work on 4-move. Theorem \ref{Theorem 3.7} was first improved by M.~D{\c a}bkowski to 
all links of 10 crossings and then, with the help of Slavik, to links of 11 crossings and knots of 12 crossings \cite{DJKS}.

\ \\
\centerline{\psfig{figure=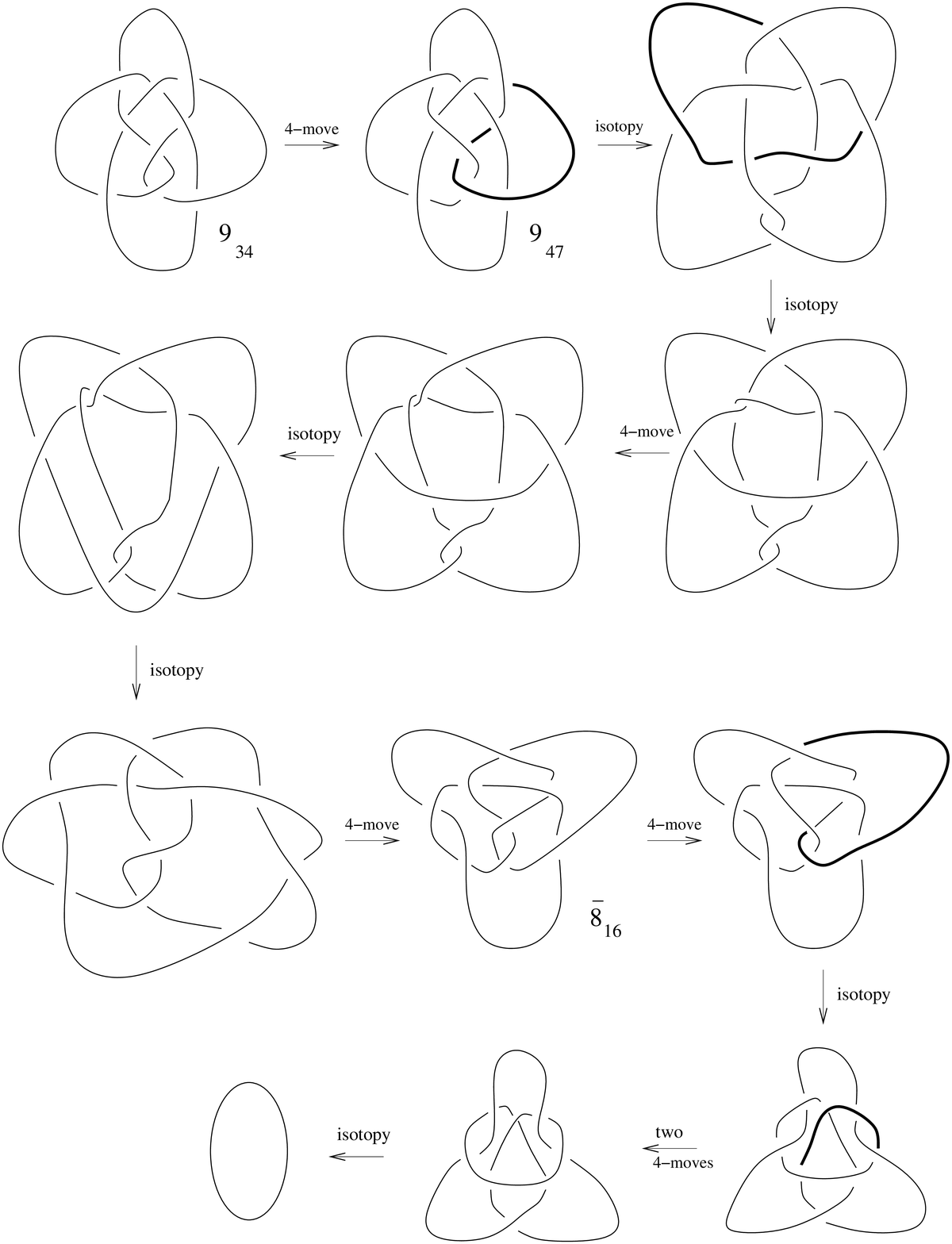,height=17.8cm}}
\ \\
\centerline{Figure 3.9; $4$-move reduction of knots $9_{34}$ and $9_{47}$}\ \\

\section{Fox $n$-coloring and its nonabelian (Burnside) generalization}\label{Section 4}

We devoted the previous section to reductions of some classes of knots (and links of two components) by 4-moves.
Now we concentrate on some invariants which were employed to try to disprove 4-move conjectures.

In this section we work with Fox colorings and their nonabelian version, Burnside groups of links.

Here we primarily follow  \cite{Prz-2,D-P-1,D-P-2}, and, an as yet not published, paper with M.D{\c a}bkowski \cite{D-P-3}.

\begin{definition}\label{Definition 4.1}

\begin{enumerate}
\item[$\mathbf{(i)}$] We say that a link $($or a tangle$)$ diagram is $k$-colored 
if every arc is colored\footnote{We call such a coloring a Fox coloring as R.Fox introduced the construction
when teaching undergraduate students at Haverford College in 1956. Fox knew well about diagrammatic description of representations 
of the fundamental group of a link complement into the dihedral group and wanted to present it in an absolutely elementary form.} 
by one of the numbers $0,1,...,k-1$ $($forming a group $\mathbb{Z}_{k})$in such a way that at each crossing the sum of the colors of the
undercrossings is equal to twice the color of the overcrossing modulo $k$; see Figure 4.1.

\item[$\mathbf{(ii)}$] The set of $k$-colorings forms an abelian group,
denoted by $Col_{k}(D)$. The cardinality of the group will be denoted by $%
col_{k}(D)$.
\end{enumerate}
\end{definition}

\centerline{\psfig{figure=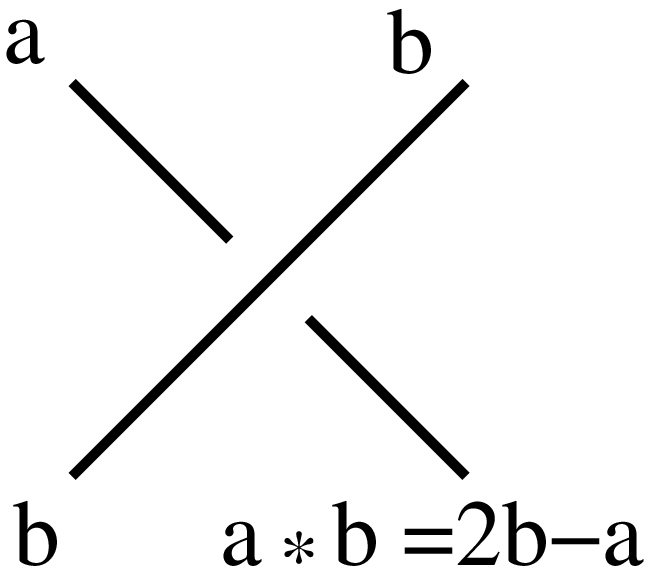,height=2.8cm}} \centerline{Figure 4.1}

\noindent One can easily show that $Col_{k}(D)$ is unchanged by Reidemeister
moves and by $k$-moves (Figure 4.2). \newline

\centerline{\psfig{figure=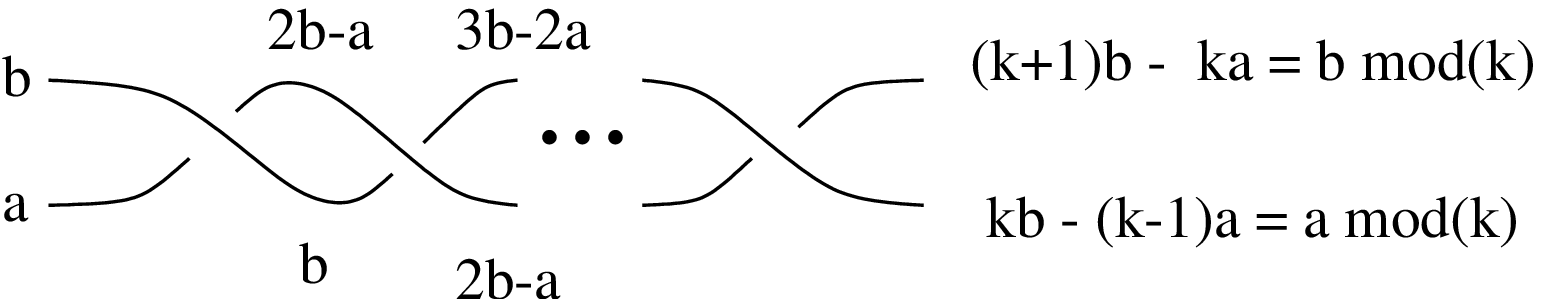,height=2.5cm}}
\centerline{Figure 4.2}

\begin{proposition}\label{Proposition 4.2} $Col_{k}(L)$ as a group is closely
related to the first homology of the double branch cover $M_{L}^{(2)}$ of $S^{3}$ along the link $L$. 
That is, the following holds:
\begin{equation*}
Col_{k}(L)=H_{1}(M_{L}^{(2)}, \mathbb{Z}_{k})\oplus \mathbb{Z}_{k}.
\end{equation*}
\end{proposition}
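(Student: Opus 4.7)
My plan is to reduce the statement to a classical presentation-matrix computation for the first homology of the double branched cover, with the extra $\mathbb{Z}_k$ summand explained by the constant colourings. The argument splits naturally into three essentially independent pieces: an easy splitting off of constants, a Fox-calculus identification with a presentation matrix for $H_1(M_L^{(2)};\mathbb{Z})$, and an elementary Smith-normal-form step.

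First I fix a connected diagram $D$ of $L$ with $c$ crossings and $c$ arcs $\alpha_1,\ldots,\alpha_c$, and encode the colouring conditions as a $c\times c$ matrix $M_D$ whose row for a crossing with overarc $\alpha_o$ and underarcs $\alpha_{u_1},\alpha_{u_2}$ has $+2$ in column $o$, $-1$ in columns $u_1,u_2$, and zero elsewhere. Then $Col_k(D) = \ker(M_D : \mathbb{Z}_k^c \to \mathbb{Z}_k^c)$. Every row of $M_D$ sums to $0$, so the constant colouring $(1,\ldots,1)^T$ is always in the kernel; evaluating a colouring at a chosen base arc $\alpha_1$ retracts $Col_k(D)$ onto this $\mathbb{Z}_k$, giving a split decomposition
\begin{equation*}
Col_k(D) \;\cong\; \mathbb{Z}_k \,\oplus\, \widetilde{Col}_k(D),
\end{equation*}
where $\widetilde{Col}_k(D)$ is the kernel of the reduced $(c-1)\times(c-1)$ matrix $\widetilde M_D$ obtained by deleting one (redundant) row and the column of $\alpha_1$.

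Next I identify $\widetilde M_D$, viewed over $\mathbb{Z}$, with a presentation matrix for $H_1(M_L^{(2)};\mathbb{Z})$. The standard route is via the Wirtinger presentation of $\pi_1(S^3 \setminus L)$ combined with Fox's free differential calculus: the Alexander matrix specialised at $t=-1$ has entries $+2$ on the overarc and $-1$ on each underarc at every crossing, and a classical argument of Fox (see also Rolfsen) identifies $H_1(M_L^{(2)};\mathbb{Z})$ with $\mathrm{coker}(\widetilde M_D)$. A small algebraic lemma then finishes the computation: for any square integer matrix $A$ with elementary divisors $d_1,\ldots,d_n$ (possibly with some $d_i=0$),
\begin{equation*}
\ker(A \bmod k) \;\cong\; \bigoplus_i \mathbb{Z}/\gcd(k,d_i) \;\cong\; \mathrm{coker}(A) \otimes \mathbb{Z}_k.
\end{equation*}
Combined with universal coefficients, whose Tor term vanishes because $M_L^{(2)}$ is a connected branched cover of $S^3$ and so has torsion-free $H_0$, this gives $\widetilde{Col}_k(D) \cong H_1(M_L^{(2)};\mathbb{Z}_k)$, and together with the initial splitting it yields the proposition.

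The main obstacle is the middle step: setting up Fox calculus carefully enough to verify that $\widetilde M_D$ really does present $H_1(M_L^{(2)};\mathbb{Z})$, and checking that the identification is independent of the choices of deleted row and base arc, so that the $\mathbb{Z}_k$ summand is canonically split off. Once that dictionary is in place, the universal-coefficient and Smith-normal-form arguments are short and mechanical; all of the real work lies in matching the diagrammatic side with the algebraic-topology side.
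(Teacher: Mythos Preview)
The paper does not actually prove Proposition~4.2: it is stated as a known classical fact, with no argument given, and the text immediately moves on to applying it. (The closest the paper comes is later, after Definition~4.4, where it quotes Wada's theorem $\Pi_D^{(2)}\cong\pi_1(M_L^{(2)})*\mathbb{Z}$ and points to \cite{Prz-2} for an elementary Wirtinger-presentation proof; abelianising that isomorphism is essentially another route to the proposition.)

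Your proposed proof is correct and is the standard one. The three steps --- splitting off the $\mathbb{Z}_k$ of constant colourings, identifying the reduced colouring matrix with the Alexander matrix at $t=-1$ and hence with a presentation matrix for $H_1(M_L^{(2)};\mathbb{Z})$, and then the Smith-normal-form identity $\ker(A\bmod k)\cong\mathrm{coker}(A)\otimes\mathbb{Z}_k$ together with universal coefficients --- are exactly how this is done in the classical references you cite (Fox, Rolfsen; also Lickorish's book). One small point worth tightening in the write-up: when you pass from the $c\times(c-1)$ system (column of $\alpha_1$ deleted) to the square $(c-1)\times(c-1)$ matrix $\widetilde M_D$, you are using that one Wirtinger relation is redundant over $\mathbb{Z}$, not just rationally; this is true, but it is precisely the content of the ``any one relation follows from the others'' fact for Wirtinger presentations, so you should make that dependence explicit rather than folding it silently into ``delete one (redundant) row.''
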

It is a natural thought to try to use Fox $4$-coloring on $4$-move conjectures.
However we do not gain much as $Col_4(L)$ depends only on the linking numbers matrix of $L$ taken modulo $2$.\\
In particular, for a knot $K$,  $Col_4(K)=\Z_4$ (module of trivial colorings), and for a link of $2$ components $L$, we have:
$$
Col_4(L)= 
 \left\{ \begin{array}{rl}
 \Z_4 \oplus \Z_4 &\mbox{ if $lk(L)$ is even} \\
 \Z_4 \oplus \Z_2  &\mbox{ if $lk(L)$ is odd}
       \end{array} \right.
$$

To have a simple description in a general case of an $n$-component link $L$, let $A_L$ 
be the linking matrix, with $\ell_{i,j}= lk(L_i,L_j)$. We enhance it by putting $\ell_{i,i}= -lk(L_i,L-L_i)= \sum_{j\neq i} -lk(L_i,L_j)$. 
We denote the new matrix by $M_L$. 
\begin{proposition}\label{Proposition 4.3} 
The matrix $2M_L$ with coefficients taken modulo $4$ is a presentation matrix for the $\Z_4$-module $Col_4(L)$.\footnote{The  matrix $M_L$ is related 
to Goeritz matrix in knot theory and Kirchhoff-Laplace matrix in graph theory \cite{Goe,Prz-9,Big}. This is expected due to 
Proposition \ref{Proposition 4.2}.} 
\end{proposition}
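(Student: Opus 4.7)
The plan is to show that $Col_4(L)$ and $\mathrm{coker}(2M_L)$ are both finitely generated $\Z_4$-modules of exponent dividing $4$, hence each isomorphic to $\Z_4^a\oplus\Z_2^b$ for some $a,b\ge 0$, and to verify that the invariants $(a,b)$ agree on the two sides. For such a module $M$ one recovers $a=\log_2|2M|$ and $a+b=\log_2|M[2]|$, so the argument reduces to computing these two quantities.

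I would first examine the mod-$2$ reduction $\pi\colon Col_4(L)\to Col_2(L)$. The Fox relation modulo $2$ reads $a\equiv c$, forcing each link component to carry a single $\Z_2$-color, so $Col_2(L)\cong\Z_2^n$. Applying the same reasoning to colorings taking all values in $2\Z_4=\{0,2\}$ shows $\ker\pi = Col_4(L)[2]\cong\Z_2^n$, giving $a+b=n$.

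The heart of the proof is identifying $\mathrm{Im}(\pi)$. Given $x=(x_1,\dots,x_n)\in Col_2(L)$, write any candidate lift as $c_\alpha = \tilde x_{i(\alpha)}+2e_\alpha$ with $\tilde x_i\in\{0,1\}\subset\Z_4$ and $e_\alpha\in\Z_2$. Substituting into the Fox relation at a crossing of $L_p$ over $L_q$ annihilates the $4e_\beta$ term mod $4$ and yields $e_\alpha+e_\gamma\equiv x_p+x_q\pmod 2$ on the understrand. Tracing $e$ once around $L_j$, it flips by $x_p+x_j$ at each cross-crossing under $L_p$ and is unchanged at self-crossings; the coloring closes iff the total flip vanishes mod $2$. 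Using the parity identity $\#(\text{crossings of }L_p\text{ over }L_j)\equiv \ell_{p,j}\pmod 2$ together with $\ell_{j,j}=-\sum_{p\ne j}\ell_{p,j}$, this closure condition collapses to $(M_L x)_j\equiv 0\pmod 2$ for each $j$. Hence $\mathrm{Im}(\pi)=\ker(M_L\bmod 2)$, each liftable $x$ admits exactly $2^n$ lifts (one free choice of $e_{j,1}$ per component), and $|Col_4(L)|=2^{2n-r}$ with $r:=\mathrm{rank}(M_L\bmod 2)$. Because $c\mapsto 2c$ factors through $\pi$ (via the natural isomorphism $\Z_2^n\xrightarrow{\cdot 2}\ker\pi$), the subgroup $2\cdot Col_4(L)$ is identified with $\mathrm{Im}(\pi)$, giving $a=n-r$ and thus $b=r$.

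A parallel computation for $N:=\Z_4^n/\mathrm{Im}(2M_L)$ yields the same invariants: since $\mathrm{Im}(2M_L)\subset 2\Z_4^n$ corresponds under $2\Z_4^n\cong\Z_2^n$ to $\mathrm{Im}(M_L\bmod 2)$ of size $2^r$, we obtain $|2N|=2^{n-r}$ and $|N[2]|=2^n$. Both $Col_4(L)$ and $\mathrm{coker}(2M_L)$ are therefore isomorphic to $\Z_4^{n-r}\oplus\Z_2^r$, proving the proposition. The principal obstacle is the closure computation in the third paragraph, where one must carefully distinguish self- from cross-crossings and invoke the parity identity converting unsigned crossing counts into linking numbers modulo $2$.
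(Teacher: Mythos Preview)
Your proof is correct, but it takes a more circuitous path than the paper's. The paper argues directly: from the observation that in $\Z_4$ one has $2b-a=a$ or $a+2$ according to the parity of $b-a$, it follows that self-crossings never change the colour and that the base-point colours $x_1,\dots,x_n$ determine the whole colouring. One then walks once around each component $L_i$ and reads off a single relation, which (using $2\equiv -2\pmod 4$ and the parity identity between crossing counts and linking numbers) collapses to the $i$th row of $2M_L$. Thus the $x_i$ serve as generators and the rows of $2M_L$ as the defining relations, in one stroke.

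Your route instead classifies both $Col_4(L)$ and $\mathrm{coker}(2M_L)$ abstractly as $\Z_4^{\,n-r}\oplus\Z_2^{\,r}$ by computing $|M[2]|$ and $|2M|$ on each side, and then matches the invariants. The ``closure computation'' you single out as the heart of the matter is precisely the paper's walking argument, just repackaged as a lifting obstruction from $Col_2$ to $Col_4$. What your framing buys is that you land squarely on the cokernel (which is what ``presentation matrix'' literally means), whereas the paper's argument more naturally identifies $Col_4(L)$ with the \emph{kernel} of $2M_L$ on $\Z_4^n$ and leaves the kernel/cokernel identification implicit (it follows from the symmetry of $M_L$ and self-duality of finite $\Z_4$-modules). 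What the paper's approach buys is economy and an explicit isomorphism rather than a mere coincidence of invariants.
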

For completeness we sketch a proof here: Let $a*b\equiv 2b-a$ in $\Z_4$. We see that 
$$ a*b= 
\left\{ \begin{array}{ll}
a &\mbox{ if $b-a$ is even} \\
a+2 & \mbox{ if $b-a$ is odd}
\end{array} \right.
$$

 In particular, colors of a given component of $L$ have the same color and self-crossings are not changing colors.
Now, denote by $L_1$,$L_2$,...,$L_n$ components of $L$ choose on each of then a base point, $b_1$,...,$b_n$ respectively, 
and denote colors in base-points by $x_1$,$x_2$,...,$x_n$. Colorings at $b_1$,...,$b_n$ uniquely describe coloring of (a diagram) $L$.
Thus we can think of $x_1$,$x_2$,...,$x_n$ as of generators of a $\Z_4$ module $Col_4(L)$. We get relations by walking along 
components. That is let us walk along $L_i$ from $b_i$ in any direction till we reach $b_i$ back. Taking into account possible changing of colors,
 ignoring self-crossings, and assuming we had $k_i$ mixed undercrossings in the path, we get a relation (modulo $4$):
$$ x_i\equiv 2\sum_{j\neq i}lk(L_i,L_j)x_j +(-1)^{k_i}x_i\equiv 2\sum_{j\neq i}lk(L_i,L_j)x_j +(-1)^{lk(L_i,L-L_i)}x_i.$$
Thus:
$$2\sum_{j}\ell_{i,j}=0,$$ which is exactly the relation of the matrix $2M_L$ with entries taken modulo $4$.

The relations to homotopy skein module 
of $S^3$, \cite{H-P,Prz-4}, should be studied.

In the next subsection
we define, for an unoriented link $L$, a non-abelian
version of the group of $n$-Fox coloring, $Col_{n}(L)$. We call this group
the unreduced $n$th Burnside group of an unoriented link $L$, and denote it
by $\hat{B}_{L}(n)$. In the second subsection we show that
similarly to the observation that the group of Fox
colorings is related to the homology of the double branch cover, the
unreduced Burnside group is related to the fundamental group of the double
branch cover. 

\subsection{Burnside groups of links}\footnote{The relation to classical 
Burnside group was noticed when the computation of Figure 4.4 was made; here is a short survey of classical Burnside groups 
after the book by M. Vaughan-Lee \cite{V-Lee}:\\
Groups of finite exponents were considered for the first time by Burnside in
1902 \cite{Bur}. In particular, Burnside himself was interested in the case when $G$ is
a finitely generated group of a fixed exponent. He asked the question, known
as the Burnside Problem, whether there exist infinite and finitely generated
groups $G$ of finite exponents. \
Let $F_{r}=\langle x_{1},\,x_{2},\,\dots ,\,x_{r}\,|\,-\rangle $ be the free
group of rank $r$ and let $B(r,n)=F_{r}/N$, where $N$ is the normal subgroup
of $F_{r}$ generated by $\{g^{n}\,|\,g\in F_{r}\}$. The group $B(r,n)$ is
known as the $r$th generator Burnside group of exponent $n$. 
$B(1,n)$ is a cyclic group $\mathbb{Z} _{n}$. $B(r,2)= \mathbb{Z}_{2}^{n}$. 
Burnside proved that $B(r,3)$ is finite for all $r$ and that $B(2,4)$ is finite. 
In 1940, I. N. Sanov proved that $B(r,4)$ is finite for
all $r$ \cite{San}, and in 1958 M. Hall proved that $B(r,6)$ is finite for
all $r$ \cite{Hall}.
However, it was proved by Novikov and Adjan in 1968 \cite{N-A-1, N-A-2,
N-A-3} that $B(r,n)$ is infinite whenever $r\geq 1$ and $n$ is an odd and $n\geq
4381$ (this result was later improved by Adjan \cite{Adj}, who showed that $ B(r,n)$ is infinite 
if $r>1$ and $n$ odd and $n\geq 665$). S. Ivanov proved
that for $k\geq 48$ the group $B(2,2^{k})$ is infinite \cite{Iv}. I. G. Lys\"{e}nok found that $B(2,2^{k})$ is infinite 
for $k\geq 13$ \cite{Lys}. It is still an open problem though whether, for example, $B(2,5)$, $B(2,7)$ or $%
B(2,8)$ are infinite or finite.\ \ 
A weaker question, associated with the Burnside Problem, is known as the
Restricted Burnside Problem and it is stated as follows. Let $K$ be the
intersection of all normal subgroups $N$ of $B(r,n)$ of a finite index, and
let $R(r,n) = B(r,n)/K$.
G.Higman proved that $R(r,5)$ is finite for all $r\geq 1$ \cite{Hig}.
A.Kostrikin generalized this to all $R(r,5)$, where $p$ is a prime \cite{Ko}. 
G.Higman and P.Hall provided the reduction (based on the classification of
finite simple groups) of the Restricted Burnside Problem for an arbitrary
exponent $n$ into the Restricted Burnside Problem for prime power exponents
dividing $n$ \cite{H-H}. Finally, in 1989, E. Zelmanov solved the Restricted
Burnside Problem for prime power exponents \cite{Zel-1},\cite{Zel-2}. For this achievement he 
was awarded a Fields Medal at the International Congress of Mathematicians in Z\"urich in 1994. }

\begin{definition}\label{Definition4.4}\
\begin{enumerate}
\item[(i)] (\cite{Joy,F-R}) The associated core group $\Pi
_{D}^{(2)}$ of an unoriented link diagram $D$ is the group with generators
that correspond to arcs of the diagram and any crossing $v$ yields the
relation $r_{s}=ba^{-1}bc^{-1}$, where $b$ corresponds
to the overcrossing and $a,c$ correspond to the undercrossings at 
$ v$; see Figure 4.3.

\item[(ii)] The unreduced $n$th Burnside group of a link $L$ is
the quotient of the associated core group of the the link by its normal
subgroup generated by all relations of the form $w^{n}=1$. Succinctly:\
\begin{equation*}
\hat{B}_{L}(n)=\Pi _{L}^{(2)}/(w^{n}).
\end{equation*}

\item[(iii)] The $n$th Burnside group of a link is the quotient
of the fundamental group of the double branched cover of $S^{3}$ with the
link as the branch set divided by all relations of the form $w^{n}=1$.
Succinctly:\
\begin{equation*}
B_{L}(n)=\pi _{1}(M_{L}^{(2)})/(w^{n}).
\end{equation*}
\end{enumerate}
\end{definition}

\centerline{\psfig{figure=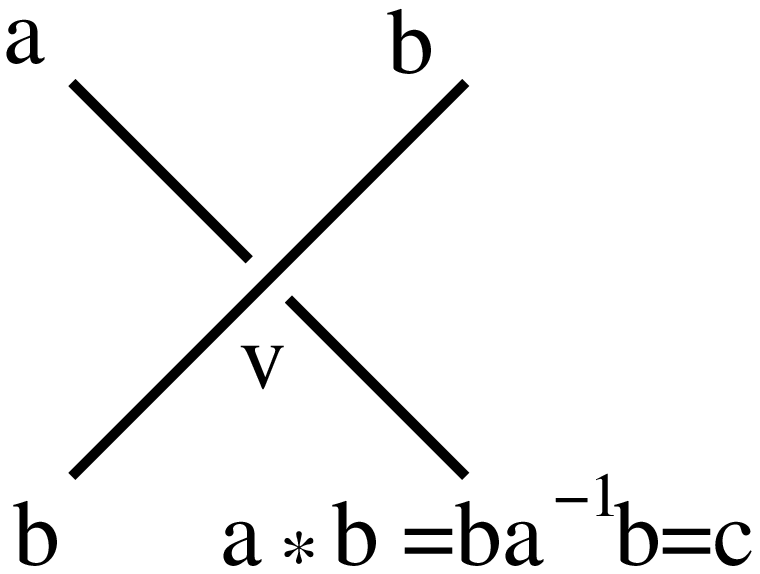,height=2.9cm}}
\centerline{Figure 4.3; core group coloring of a crossing}
\ \\
\noindent \textbf{Remark 2}\label{3.2} The associated core group of a link
was defined in \cite{F-R}. Moreover, Wada in \cite{Wad} proved that the
following holds\footnote{The presentation was also found before,
in 1976, by Victor Kobelsky a student of Oleg Viro, but this was never
published as Viro proved that the
presentation is related to the fundamental group of the double
branch covering \cite{Vi}.
See \cite{Prz-2} for an elementary proof using only Wirtinger presentation.}:
\begin{equation*}
\Pi _{D}^{(2)}=\pi _{1}(M_{L}^{(2)})\ast 
\mathbb{Z},
\end{equation*}%
In the presentation of $\Pi _{L}^{(2)}$ one relation can be dropped since it
is a consequence of others. Furthermore, if we put $y_{i}=1$ for any fixed
generator, then $\Pi _{D}^{(2)}$ reduces to $\pi _{1}(M_{L}^{(2)})$.\medskip

\noindent Notice that, abelianizing the $n$th Burnside groups of links one
obtains:
\begin{proposition}
\begin{equation*}
\hat{B}_{L}^{ab}(n)=Col_{n}(L)\text{ and }B_{L}^{ab}(n)=H_{1}(M_{L}^{(2)},
\mathbb{Z}_{n}).
\end{equation*}
\end{proposition}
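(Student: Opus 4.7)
The plan is to derive both equalities from a single algebraic observation about abelianizing a quotient by the normal closure of $n$-th powers, and then apply it in parallel to the two groups $\Pi_L^{(2)}$ and $\pi_1(M_L^{(2)})$. The observation is: for any group $G$, if $N(n)\trianglelefteq G$ denotes the normal subgroup generated by $\{w^n:w\in G\}$, then $(G/N(n))^{ab}\cong G^{ab}/nG^{ab}\cong G^{ab}\otimes_{\Z}\Z_n$. The containment $nG^{ab}\subseteq\overline{N(n)}$ is immediate because each $w^n$ maps to $n\bar w$ in $G^{ab}$. For the reverse inclusion, any element of $N(n)$ is a product of conjugates of $n$-th powers, and since conjugation acts trivially on $G^{ab}$ and group products become sums, the image of $N(n)$ in $G^{ab}$ lies inside $nG^{ab}$.

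Applied to $G=\Pi_L^{(2)}$, the core group presentation from Definition 4.4(i) has one generator per arc and one relation $ba^{-1}bc^{-1}=1$ per crossing; abelianizing this relation yields $2b-a-c=0$, so $\Pi_L^{(2),ab}$ is the abelian group with one generator per arc subject to $2b=a+c$ at every crossing. Tensoring with $\Z_n$ amounts to reducing these generators and relations modulo $n$, and the resulting $\Z_n$-module is literally the defining data of $Col_n(L)$ from Definition 4.1, so $\hat B_L^{ab}(n)=Col_n(L)$. For the second equality I apply the same observation to $G=\pi_1(M_L^{(2)})$: Hurewicz gives $\pi_1(M_L^{(2)})^{ab}=H_1(M_L^{(2)};\Z)$, and universal coefficients (with $H_0(M_L^{(2)};\Z)=\Z$ free, so the Tor term vanishes) yields $H_1(M_L^{(2)};\Z)\otimes\Z_n=H_1(M_L^{(2)};\Z_n)$, giving $B_L^{ab}(n)=H_1(M_L^{(2)};\Z_n)$. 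These two statements are mutually consistent with Wada's decomposition $\Pi_L^{(2)}=\pi_1(M_L^{(2)})\ast\Z$ from Remark 2: the free $\Z$ factor abelianizes and reduces mod $n$ to an extra $\Z_n$ summand, precisely the discrepancy between $Col_n(L)$ and $H_1(M_L^{(2)};\Z_n)$ recorded in Proposition 4.2.

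No step here is a real obstacle: the proposition is essentially a tautology once one observes that the nonabelian relation $w^n=1$ abelianizes to the $n$-torsion relation $n\bar w=0$. The only substantive piece is the lemma on normal closures of $n$-th powers, which amounts to recognizing that conjugation collapses in the abelianization. In particular, no topology beyond Hurewicz and universal coefficients is needed, and no delicate reasoning about the diagram enters the argument.
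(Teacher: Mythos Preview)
Your proof is correct and complete. The paper itself does not give a proof of this proposition: it is stated immediately after the phrase ``Notice that, abelianizing the $n$th Burnside groups of links one obtains,'' and left at that. Your argument supplies exactly the details the paper suppresses---the lemma $(G/N(n))^{ab}\cong G^{ab}\otimes_{\Z}\Z_n$, the identification of $(\Pi_L^{(2)})^{ab}$ with the Fox-coloring module via the abelianized crossing relation $2b=a+c$, and the Hurewicz/universal-coefficients step for $\pi_1(M_L^{(2)})$---so there is nothing to compare beyond noting that you have written out what the paper regards as routine.
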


The fact that $n$-moves preserve the module of Fox $n$-colorings can be 
generalized to Burnside groups of links and to rational moves (we formulate it in full generality but 
will use only for integral moves; see Figure 4.4 for the illustrative proof in this case).
\ \\ \ \\
\centerline{\psfig{figure=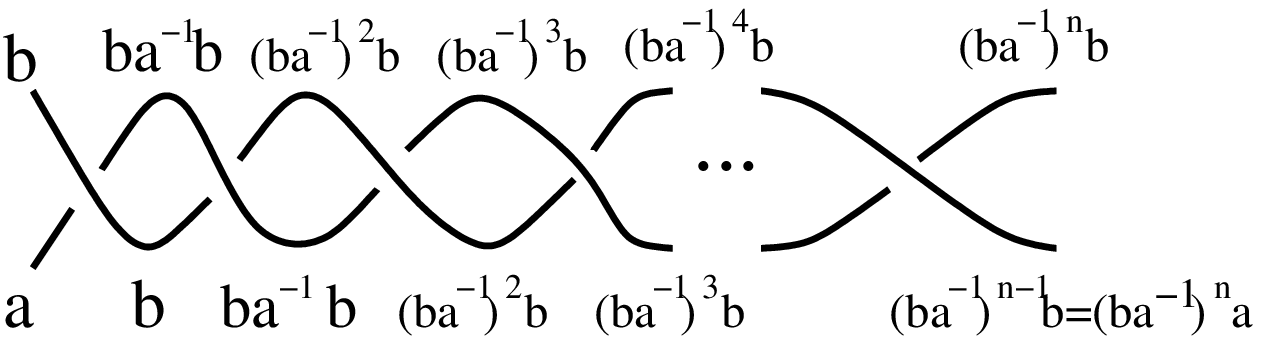,height=3.4cm}}
\ \\
\centerline{Figure 4.4;  Core group under $n$-move} 

\begin{theorem}
\label{3.3} $\hat{B}_{L}(p)$ and $B_{L}(p)$ are preserved by rational $\frac{%
p}{q}$-moves.
\end{theorem}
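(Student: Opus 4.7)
The argument is local and topological. A $\frac{p}{q}$-move takes place inside a ball $B \subset S^3$ meeting $L$ in four arcs; outside $B$ the link is unchanged. In the double branched cover, $B$ lifts to a solid torus $T_{p/q}$ and its complement $M_0 := M_L^{(2)} \smallsetminus T_{p/q}$ is a compact $3$-manifold with torus boundary $\Sigma$ that depends only on $L \smallsetminus B$, not on which rational tangle sits inside $B$. On $\Sigma$ there is a natural basis $(\mu_0, \mu_\infty)$ of $\pi_1(\Sigma) \cong \Z^2$ given by the meridians of the branched covers of the $0$- and $\infty$-tangles, and the standard $PSL(2,\Z)$-action on rational tangles identifies the meridian of $T_{p/q}$ with the curve $\mu_\infty^p \mu_0^q$ on $\Sigma$. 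Therefore
\begin{equation*}
\pi_1(M_L^{(2)}) = \pi_1(M_0)\big/\langle\langle \mu_0\rangle\rangle, \qquad \pi_1(M_{L'}^{(2)}) = \pi_1(M_0)\big/\langle\langle \mu_\infty^p \mu_0^q\rangle\rangle,
\end{equation*}
where $L'$ denotes the result of the move.

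I would first handle the integer case $q=1$ as a warm-up, which is the computation hinted at by Figure~4.4. Labelling the two strand colors entering an $n$-twist region by $x_1$ and $x_2$, a one-line induction using the crossing relation $c = ba^{-1}b$ shows that after $k$ twists the strand colors are $(w^{k-1}x_2,\, w^k x_2)$ with $w := x_2 x_1^{-1}$. Equating the outputs with those of the $0$-tangle yields the single extra relation $w^n = 1$, which is automatic in $\hat B_L(n)$. Under Wada's identification $\Pi_L^{(2)} \cong \pi_1(M_L^{(2)}) \ast \Z$ the element $w$ corresponds to $\mu_\infty$, so this reproduces the topological identification $\mu_{n/1} = \mu_\infty^n$ at the level of the core group.

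For the general $\frac{p}{q}$-move, the statement reduces to a group-theoretic observation. Let $N$ be the normal subgroup of $\pi_1(M_0)$ generated by all $p$-th powers. In $\pi_1(M_0)/N$ the element $\mu_\infty^p$ is trivial, so the relation $\mu_\infty^p \mu_0^q = 1$ becomes $\mu_0^q = 1$; combined with $\mu_0^p = 1$ and B\'ezout ($ap + bq = 1$), this is equivalent to $\mu_0 = 1$. Conversely $\mu_0 = 1$ immediately yields $\mu_\infty^p \mu_0^q = \mu_\infty^p \in N$. Hence the normal closures of $\mu_0$ and of $\mu_\infty^p \mu_0^q$ coincide modulo $N$, and
\begin{equation*}
B_L(p) \;=\; \pi_1(M_0)\big/\langle\langle \mu_0,\,g^p\rangle\rangle \;=\; \pi_1(M_0)\big/\langle\langle \mu_\infty^p \mu_0^q,\,g^p\rangle\rangle \;=\; B_{L'}(p).
\end{equation*}
The statement for $\hat B_L(p)$ follows immediately from Wada's decomposition, since the free $\Z$-factor is unaffected by the move.

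The main obstacle is identifying $\mu_{p/q}$ with $\mu_\infty^p \mu_0^q$ as an element of $\pi_1(M_0)$, not merely of the abelian boundary group $\pi_1(\Sigma)$. The non-commutativity of $\pi_1(M_0)$ makes the order of the factors and the choice of basepoint a priori ambiguous, but this is harmless for us because only the normal closure of this element enters the argument. A fully explicit algebraic verification, should one wish to bypass the topological black box, would proceed by induction on the continued-fraction length of $p/q$: each integer twist is handled by the warm-up computation, and the rotation $r$ of Definition~3.1 swaps the roles of $\mu_0$ and $\mu_\infty$ between successive twists, so iterating yields the desired word up to the ambiguities just described.
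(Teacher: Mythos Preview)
Your warm-up for the integer case $q=1$ is exactly the computation the paper carries out (diagrammatically, via Figure~4.4): walking along the twist region in the core group gives outputs $(w^{k}x_1,\,w^{k}x_2)$ with $w=x_2x_1^{-1}$, so passing to the Burnside quotient kills $w^n$ and the move becomes invisible. That part agrees with the paper.

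For the rational case the paper does not actually supply an argument---it states the theorem in full generality but explicitly says it will only use, and only illustrates, the integral case. Your proof therefore goes substantially beyond what the paper provides. The route you take is the Montesinos picture: the tangle ball lifts to a solid torus in $M_L^{(2)}$, so changing the rational tangle is a Dehn filling of the fixed outside piece $M_0$, and one compares the two fillings in $\pi_1(M_0)$ modulo $p$-th powers. Your key group-theoretic reduction (in $\pi_1(M_0)/N$ the filling slope $\mu_\infty^{\,p}\mu_0^{\,q}$ collapses to $\mu_0^{\,q}$, and $\gcd(p,q)=1$ together with $\mu_0^{\,p}=1$ forces $\langle\langle\mu_0^{\,q}\rangle\rangle=\langle\langle\mu_0\rangle\rangle$) is clean and correct; note too that $\mu_0$ and $\mu_\infty$ genuinely commute in $\pi_1(M_0)$ since both lie in the image of the abelian peripheral subgroup $\pi_1(\Sigma)$, so the ordering worry you raise in your last paragraph is in fact not an issue at all. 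One small slip: in the warm-up you write ``$\mu_{n/1}=\mu_\infty^{\,n}$''; the filling slope itself is $\mu_\infty^{\,n}\mu_0$, and it is only after killing $\mu_0$ (i.e.\ in $\pi_1(M_L^{(2)})$ rather than $\pi_1(M_0)$) that the added relation reads $\mu_\infty^{\,n}=1$. Also, the tangle ball meets $L$ in two arcs (four endpoints), not four arcs.

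In short: the paper's proof is purely diagrammatic and restricted to integers; yours recovers that and then handles the genuinely new rational case via the branched-cover/surgery correspondence, which is the natural (and standard) way to see why the statement holds and which the paper leaves to the references.
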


The method of Burnside group of links introduced in \cite{D-P-1,D-P-2,D-P-3} allowed us to settle (disprove)
conjectures of Montesinos-Nakanishi, Kawauchi, and Harikae-Nakanishi-Uchida
about 3-moves, 4-moves, and $(2,2)$-moves, respectively; in particular, links of Figure 3.7 (``half" 2-cabling of the Whitehead link), and 
Figure 3.8 (Borromean rings) are not 4-move equivalent to the trivial link of three components. We can also show
that the manifold $M^{(2)}_{9_{49}}$-- the double branched cover of $S^3$ along the link $9_{49}$ (suspected of being the smallest volume
hyperbolic oriented 3-manifold; compare \cite{M-V,Prz-7}) is not obtainable from
$(S^1 \times S^2)^k$ by a finite sequence of $q/5$-surgeries\footnote{We notice in \cite{D-P-3} that the $k/m$ rational move on a link yields
an $m/k$- surgery on the associated double branched covering of $S^3$ 
branched along the link.}. For these results we refer to \cite{D-P-1,D-P-2,D-P-3}.

Our method using Burnside groups will not produce any obstruction if the
abelianization of the Burnside group of the link is a cyclic group and the
Burnside group is finite.  In particular, the $4$th Burnside group invariant of links
cannot be used to answer Nakanishi's $4$-conjecture and Kawauchi's 4-move conjecture for links of two components.

The result follows from \cite{Mag,Ch} but we present
a short proof in our case in the next subsection.

\subsection{Limitations of Burnside groups on links} \

 Let $R_{L}(n)=B_{L}(n)/K$, where $K$ is
the intersection of all normal subgroups of $B_{L}(n)$ of finite indexes. We call  $R_{L}(n)$ the restricted Burnside group of a link.

\begin{theorem}\label{Theorem 4.7}
\label{8.1} Let $n=p^{k}$, where $p$ is a prime number and $k\geq 1$.
Assume, that the abelianization of $B_{L}(n)$, 
$ (B_{L}(n))^{(ab)}=H_{1}(M_{L}^{(2)}, \mathbb{Z}
_{n})$, is a cyclic group $(i.e.\{1\}\,{or}\, \mathbb{Z}_{n})$. 
Then the restricted Burnside group, $R_{L}(n)$, is isomorphic to $H_{1}(M_{L}^{(2)},\mathbb{Z}_{n})$. 
In particular, if $B_{L}(n)$ is finite (e.g. for $n=2,\,3,\,4)$,
then $B_{L}(n)$ is a cyclic group.
\end{theorem}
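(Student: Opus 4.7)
The plan is to show $R_L(n)$ is a finite $p$-group, and then invoke the Burnside Basis Theorem to upgrade ``cyclic abelianization'' to ``cyclic group.''

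For the finiteness, I would fix a diagram $D$ of $L$ with $r$ arcs and view $B_L(n)$ as a quotient $\phi \colon B(r,n) \twoheadrightarrow B_L(n)$ of the free Burnside group of rank $r$ and exponent $n$. Writing $K_0 \triangleleft B(r,n)$ and $K \triangleleft B_L(n)$ for the intersections of all normal subgroups of finite index, any finite-index normal $N' \triangleleft B_L(n)$ has preimage $\phi^{-1}(N')$ finite-index and normal in $B(r,n)$, and hence containing $K_0$; thus $\phi(K_0) \subseteq K$, and $\phi$ descends to a surjection $R(r,n) \twoheadrightarrow R_L(n)$. Since $n=p^k$ is a prime power, Zelmanov's solution of the Restricted Burnside Problem \cite{Zel-1,Zel-2}, together with the elementary cases $n=2,3,4$ due to Burnside and Sanov, gives that $R(r,p^k)$ is finite; every element has $p$-power order, so it is a finite $p$-group, and so is its quotient $R_L(n)$.

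Now set $G := R_L(n)$ and let $\Phi(G) = [G,G]G^p$ be its Frattini subgroup. The Burnside Basis Theorem says $G/\Phi(G)$ is elementary abelian and that its $\mathbb{F}_p$-dimension equals the minimum number of generators of $G$. Because $H_1(M_L^{(2)}, \mathbb{Z}_n)$ is a finitely generated $\mathbb{Z}_n$-module, it is finite, so $[B_L(n),B_L(n)]$ has finite index in $B_L(n)$ and hence contains $K$. Therefore
\[
G^{ab} \;=\; B_L(n)/[B_L(n), B_L(n)] \;=\; H_1(M_L^{(2)}, \mathbb{Z}_n),
\]
which by hypothesis is trivial or cyclic of $p$-power order. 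In either case $G^{ab}/(G^{ab})^p$ has order at most $p$, so $G$ needs at most one generator, and is thus cyclic. A finite cyclic group coincides with its abelianization, giving $R_L(n) \cong H_1(M_L^{(2)}, \mathbb{Z}_n)$.

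For the ``in particular'' clause: if $B_L(n)$ is finite, then the trivial subgroup already has finite index, so $K=\{1\}$ and $R_L(n)=B_L(n)$, which is then cyclic. The hypothesis $B_L(n)$ finite holds for $n=2,3,4$ because $B(r,n)$ is finite for those exponents (trivially, by Burnside, and by Sanov \cite{San} respectively), forcing any quotient $B_L(n)$ to be finite too. The only step requiring real care is the inclusion $K \subseteq [B_L(n),B_L(n)]$, which depends on the finiteness of the mod-$n$ first homology of the double branched cover; beyond that, the argument is a direct assembly of Zelmanov's theorem and the Burnside Basis Theorem for $p$-groups, and I do not anticipate any deeper obstacle.
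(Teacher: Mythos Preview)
Your proof is correct and reaches the same conclusion as the paper's, but the key step is packaged differently. Both arguments first use Zelmanov's theorem to see that $R_L(n)$ is a finite $p$-group, and both then argue that a finite $p$-group with cyclic abelianization must be cyclic. Where you invoke the Burnside Basis Theorem (minimal generating sets are detected by $G/\Phi(G)$, which here has order at most $p$), the paper works directly with the lower central series of $B_L(n)$: it identifies $K$ with the stable term $\gamma_i$ of the series (using that $R_L(n)$, being a finite $p$-group, is nilpotent), and then shows by hand that cyclic abelianization forces $\gamma_2=\gamma_3$, hence $\gamma_2=\gamma_i$, so $R_L(n)=(B_L(n))^{ab}$. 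Your route is shorter and more conceptual, since the Burnside Basis Theorem encapsulates exactly the ``cyclic abelianization $\Rightarrow$ cyclic'' step for $p$-groups; the paper's route is more self-contained, doing the commutator bookkeeping explicitly and making visible the identification $K=\gamma_i$ that you bypass. Your careful check that $K\subseteq [B_L(n),B_L(n)]$ (so that $R_L(n)^{ab}=(B_L(n))^{ab}$) is a point the paper takes for granted, so in that respect your argument is actually a bit more scrupulous.
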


{\noindent \textit{Proof:}}\thinspace\ By a celebrated theorem of  E. Zelmanov, the
largest finite quotient of $B(r,n)$, $R(r,n)$, exists for any $n$ (see for example \cite{V-Lee}). 
Thus $R_{L}(n)$ is finite for any positive integer $n$ and for $n=p^{k}$ it is a finite $p$-group. \
Let $$B_{L}(n)=\gamma _{1}\geq \gamma _{2}=[\gamma_1,\gamma_1]\geq 
\gamma_3=[\gamma_2,\gamma_1] \geq \cdots \geq \gamma _{i}=[\gamma_{i-1},\gamma_1] \geq \cdots $$ be
the lower central series of $B_{L}(n)$. It follows from the work of Zelmanov that $R_{L}(n)=B_{L}(n)/\gamma _{i}$, 
where $B_{L}(n)=\gamma_{1} > \gamma_{2}> \cdots > \gamma_{i} = \gamma_{i+1}=...$.
If $(B_{L}(n))^{(ab)}=\{1\}$, then $ B_{L}(n)=\gamma _{2}=\dots =\gamma _{i} = \cdots$ 
and in the consequence 
$R_{L}(n)=(B_{L}(n))^{(ab)}=H_{1}(M_{L}^{(2)}, \mathbb{Z} _{n})=\{1\}$. \newline 
If $(B_{L}(n))^{(ab)}= \mathbb{Z}_{n}$, then we can choose the generators $x_{1},\dots ,x_{r}$ of $B_{L}(n)$, 
$r\geq 1$, so that $(B_{L}(n))^{(ab)}=\langle x_{1}\,|\,x_{1}^{n}\rangle $,
and $x_{j}$ can be expressed as a product of commutators (elements of $%
\gamma _{2}$) for every $j\neq 1$. Hence, for all $k,\,l\in \{1,\,2,\,\dots
,n\}$, at least one element of the pair $\{x_{l},\,x_{k}\}$ is in $\gamma
_{2}$, so each commutator $[x_{k},x_{l}]$ is a product of the third
commutators (elements of $\gamma _{3}$), thus $\gamma _{2}=\gamma _{3}$, and 
further $\gamma _{2}=\gamma _{3}=\cdots =\gamma _{i}=\cdots $,
and therefore $R_{L}(n)=B_{L}(n)/\gamma _{2}= (B_{L}(n))^{(ab)}=H_{1}(M_{L}^{(2)}, \mathbb{Z}_{n}) = {\Z}_{n}$.\newline
If $B_{L}(n)$ is finite (as it is the case of $n=2,\,3,\,4$), then 
$R_{L}(n)=B_{L}(n)$, so $B_{L}(n)$ is a cyclic group (equal to $H_{1}(M_{L}^{(2)},\Z_n))$. 
\hfill $\square $ 
\newline
 
It is proposed in \cite{D-S,BHT} that the fundamental group of a link complement, $\pi_1(S^3-L)$, be used  in place of the fundamental group 
of the double branched cover of $S^3$ branched along $L$ in the analysis of $4$-moves. In this case one have to break a $4$-move into 
$t_4$ and $\bar t_4$ and instead of Burnside condition, $w^4=1$ consider 2 conditions on $\pi_1(S^3-L)$ given by $t_4$ and $\bar t_4$ moves.
It did not produce yet new results and D{\c a}bkowski suggested that it should not work better than Burnside groups of links, 
in particular, should not disprove Nakanishi or Kawauchi 4-move conjectures.

\section{4-moves and quantum invariants}

As mentioned before, Nakanishi checked whether his conjecture can be approached using many classical knot invariants.
I checked many invariants as well when writing \cite{Prz-1} (with negative result).
Below I recall it in the case of the the Jones polynomial (starting from its unoriented version -- the Kauffman bracket polynomial).

Consider the $n$-move on the Kauffman bracket:
we start computation from the tangle $L_n= \parbox{3.9cm}{\psfig{figure=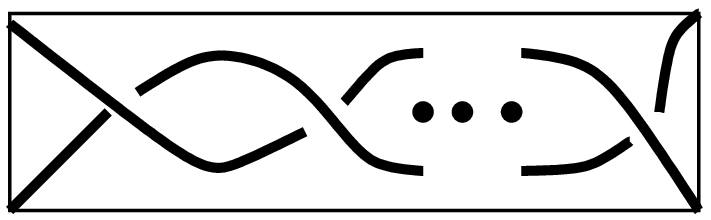,height=1.2cm}}$. 
In the Kauffman bracket skein module\footnote{See \cite{Prz-3} 
for the survey on the Kauffman bracket skein modules.} of tangles we have:
$$<L_n> = A<L_{n-1}> + A^{-1}(-A^3)^{-n+1}<L_{\infty}> = $$ 
$$A^n<L_0> + ((-1)^{1-n}A^{-3n+2}+ (-1)^{n-2}A^{-3n+6} +...+A^{n-2})<L_{\infty}>=$$
$$A^n<L_0> + A^{2-3n} \frac{A^4n+(-1)^{n-1}}{A^4+1}<L_{\infty}>=$$
Thus for $A^{4n}=(-1)^n$ (and $A^4\neq -1$) we have 
$$<L_n> = A^n<L_0>.$$ 
In particular, for a $4$-move and $(A^4)^4=1$, $A^4\neq -1$ we have
$<L_4> = A^4<L_0>$. We can put $A^{-1}=e^{\pi i/8} $ (so $A^{-2}=e^{\pi i/4}$ and $A^{-4}=e^{\pi i/2}=i$), and then:
$$<L_4> = -i<L_0>.$$
Recall that if we orient a diagram and take the writhe $w({\vec D})=\sum_{v\in Cr(D)}sgn(v)$ then
the Jones polynomial
$V_{\vec D}= (-A^3)^{-w(\vec D)}<D>$ for $t=A^{-4}$. In the case of a 4-move we put $t= A^{-4}= i$ (and ${\sqrt t}= A^{-2}=e^{\pi i/4}$).
Thus 
$$
V_{\vec L_4}= 
 \left\{ \begin{array}{rl}
 -V_{\vec L_0} &\mbox{ if arcs in $\vec L_0$ are parallel; $t_4$-move} \\
 V_{\vec L_0}  &\mbox{ if arcs in $\vec L_0$ are anti-parallel; $\bar t_4$-move}
       \end{array} \right.
$$

This should be compared with the Lickorish-Millett formula for the Jones polynomial at $t=i$ 
(precisely ${\sqrt t}= e^{\pi i/4}$), \cite{L-M,Lic}:
$$V_L(i)= \left\{ \begin{array}{rl}
(-\sqrt 2)^{com (L)-1}(-1)^{A(L)} & \mbox{ if $lk(L_j,L-L_j)$ is even for every $j$} \\
0 & \mbox{ otherwise}
\end{array} \right.
$$
In the formula $A(L)\in \{0,1\}$ is the Arf (or Robertello) invariant of oriented links\footnote{As stressed in \cite{Lic} 
the formula gives the simplest definition of the Arf invariant and explains why it is defined only if all $lk(L_j,L-L_j)$ are even.}, 
defined only if for every component $L_j$ of $L$ the 
linking number $lk(L_j,L-L_j)$ is even.

Comparing our 4-move formula with the Lickorish-Millett formula we conclude that the Jones polynomial gives very little information 
on 4-move problems, even less information than the linking number modulo 2 between individual components of a link. The best we can say is 
formulated in Proposition \ref{Proposition 5.1}.
For a trivial link, $T_n$, we have $V_{T_n}(i) = (-\sqrt 2)^{n}$, thus in the case a  link $L$ is 4-move equivalent to the 
trivial link we know that we have to use $t_4$-moves $A(L)$ (modulo 2). More generally:
\begin{proposition}\label{Proposition 5.1}. 
Assume that the links $L_1$ and $L_2$ are 4-move equivalent and their Arf invariants are defined. Then if a path from $L_1$ to $L_2$ uses
 $k$ $t_4$-moves (and any number of $\bar t_4$-moves) then $k \equiv (A(L_1) - A(L_2)) \mod 2$.
  \end{proposition}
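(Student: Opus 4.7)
The plan is to run the 4-move formula for the Jones polynomial at $t=i$ along the given 4-move path and match the result against the Lickorish--Millett formula.

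First I would record the behavior of the Jones polynomial at $t=i$ under a single 4-move. From the computation just before the proposition, a $t_4$-move (parallel strands) gives $V_{\vec L_4}(i)=-V_{\vec L_0}(i)$, while a $\bar t_4$-move (anti-parallel strands) gives $V_{\vec L_4}(i)=V_{\vec L_0}(i)$. Reidemeister moves preserve $V$. Hence, if the path from $L_1$ to $L_2$ consists of $k$ $t_4$-moves and arbitrarily many $\bar t_4$-moves, then
\[
V_{L_2}(i)=(-1)^{k}\,V_{L_1}(i).
\]

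Next I would check that both Arf invariants being defined makes this identity nontrivial. Since 4-moves preserve the number of components and the linking-number matrix modulo $2$ (as noted earlier in the paper), the hypothesis ``$\mathrm{lk}(L_j,L-L_j)$ is even for every component'' propagates along the path: if it holds for $L_1$ it holds for every intermediate link, and for $L_2$. In particular $\mathrm{com}(L_1)=\mathrm{com}(L_2)=:c$, and the Lickorish--Millett formula applies to both endpoints, giving
\[
V_{L_i}(i)=(-\sqrt{2})^{c-1}(-1)^{A(L_i)}\neq 0,\qquad i=1,2.
\]

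Substituting into the previous identity and canceling the common nonzero factor $(-\sqrt{2})^{c-1}$ yields
\[
(-1)^{A(L_2)}=(-1)^{k}(-1)^{A(L_1)},
\]
i.e.\ $(-1)^{k}=(-1)^{A(L_1)-A(L_2)}$, which is precisely $k\equiv A(L_1)-A(L_2)\pmod 2$.

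The argument is essentially a one-line matching, so there is no serious obstacle; the only point requiring care is justifying that the Arf-defined hypothesis is preserved along the entire 4-move sequence (so that Lickorish--Millett may be invoked at every stage), which follows from the invariance of the mod $2$ linking matrix under 4-moves.
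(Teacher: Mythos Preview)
Your proof is correct and is exactly the argument the paper has in mind: the proposition is stated immediately after the $4$-move formula for $V(i)$ and the Lickorish--Millett evaluation, with no separate proof given, because combining the two as you do yields the result in one line. One minor remark: you only need Lickorish--Millett at the two endpoints $L_1$ and $L_2$, not at every intermediate stage, since the relation $V_{L_2}(i)=(-1)^k V_{L_1}(i)$ already follows from the behavior of $V(i)$ under each individual move; the nonvanishing at the endpoints (guaranteed by the hypothesis that both Arf invariants are defined) is all that is required to extract the parity of $k$.
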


One can similarly analyze the Homflypt and the Kauffman polynomial of two variables (using formulas from \cite{Prz-1}). 

Let us discuss shortly the case of the Kauffman polynomial of two variables $\Lambda_L(a,x)$ of regular isotopy (invariant under $R_2$ and 
$R_3$ Reidemeister move).  Recall that the
polynomial $\Lambda_L(a,x)$ is given by the skein relation $\Lambda_{L_+}(a,x) + \Lambda_{L_-}(a,x)= x(\Lambda_{L_0}(a,x)+ \Lambda_{L_{\infty}}(a,x)) $,
the framing relation, which can be expressed as saying that the positive first Reidemeister move is changing the Kauffman polynomial by $a$, and
initial condition $\Lambda_{T_1}(a,x)=1$.
We have, \cite{Prz-1}:
\begin{theorem}\label{Theorem 5.2}\
 \begin{enumerate} 
\item[(1)] If $x= p+p^{-1}$, $p^{2n}=1, p^4\neq 1$, and $a^n=p^n$, $a\neq p^{\pm 1} $,
  then the polynomial $\Lambda_L(a,x)$ is preserved by an $n$-move up to the sign (Corollary 1.15 of \cite{Prz-1}).
\item[(2)] If $p^4=-1$ (say $p=e^{\pi i/4}$) and $p^2= a^{2}=i$, $a\neq p^{\pm 1} $ (i.e. $p=-e^{\pm \pi i/4}$) then
$\Lambda_{L_4}(a,x)$ = -$\Lambda_{L_0}(a,x)$.
\end{enumerate}
\end{theorem}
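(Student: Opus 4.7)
My plan is to imitate, for $\Lambda(a,x)$, the Kauffman bracket computation the paper just carried out: apply the skein relation to the topmost crossing of the $n$-twist tangle $L_n=\sigma_1^n$, solve the resulting linear recursion in $n$, and then feed in the parameter hypotheses to see that the ``unwanted'' coefficients vanish. Applying the Kauffman skein relation $\Lambda_{L_+}+\Lambda_{L_-}=x(\Lambda_{L_0}+\Lambda_{L_\infty})$ at the top crossing of $L_n$ gives
\begin{equation*}
\Lambda_{L_n}+\Lambda_{L_{n-2}}=x\,\Lambda_{L_{n-1}}+x\,\Lambda_{T_{n-1}},
\end{equation*}
where $T_{n-1}$ is the $\infty$-smoothing of the top crossing; topologically it is $L_\infty$ with $\sigma_1^{n-1}$ capped at its top, and the $n-1$ remaining crossings become kinks of one uniform sign on the bottom arc, so the framing rule yields $\Lambda_{T_{n-1}}=a^{n-1}\Lambda_{L_\infty}$.

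Next I would decompose $\Lambda_{L_n}=\alpha_n\Lambda_{L_0}+\beta_n\Lambda_{L_\infty}+\gamma_n\Lambda_{L_1}$, using that the Kauffman $2$-tangle skein module is generated by $L_0,\,L_\infty,\,L_1$. The recursion splits into three pieces. Both $\alpha_n$ and $\gamma_n$ satisfy the homogeneous Chebyshev-type recursion $f_n=xf_{n-1}-f_{n-2}$, whose characteristic roots are $p$ and $p^{-1}$ when $x=p+p^{-1}$; the initial data $(\alpha_0,\alpha_1)=(1,0)$ and $(\gamma_0,\gamma_1)=(0,1)$ produce the closed forms
\begin{equation*}
\alpha_n=\frac{p^{2-n}-p^n}{p^2-1},\qquad \gamma_n=\frac{p^n-p^{-n}}{p-p^{-1}}.
\end{equation*}
The coefficient $\beta_n$ satisfies the inhomogeneous recursion $\beta_n+\beta_{n-2}=x\beta_{n-1}+xa^{n-1}$; it admits the particular solution $Ea^n$ with $E=xa/\bigl((a-p)(a-p^{-1})\bigr)$ (the hypothesis $a\ne p^{\pm1}$ is exactly what keeps $E$ finite), and matching $\beta_0=\beta_1=0$ yields $\beta_n=E\bigl(a^n-a\gamma_n+\gamma_{n-1}\bigr)$.

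Finally I would plug in the hypotheses. The assumption $p^{2n}=1$ forces $p^n=\pm1$, which makes $\gamma_n=0$ and, by direct substitution of $p^{-n}=p^n$, also $\gamma_{n-1}=-p^n$ and $\alpha_n=p^n$. Therefore
\begin{equation*}
\Lambda_{L_n}=p^n\Lambda_{L_0}+E(a^n-p^n)\Lambda_{L_\infty},
\end{equation*}
and the remaining hypothesis $a^n=p^n$ kills the $\Lambda_{L_\infty}$ coefficient, leaving $\Lambda_{L_n}=p^n\Lambda_{L_0}=\pm\Lambda_{L_0}$, which is part~(1). Part~(2) is the specialization $n=4$, $p^4=-1$, $a^2=p^2=i$ (so $a^4=p^4=-1$ and $a\ne p^{\pm1}$ is automatic), giving the sign $p^4=-1$.

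The only delicate step in this plan is the diagrammatic claim in the first paragraph that capping the top of $\sigma_1^{n-1}$ produces $n-1$ kinks of one uniform sign, so that the framing rule collapses to the clean factor $a^{n-1}$ rather than an alternating product; this is a small induction on $n$ using planar $R_1$ and $R_2$ moves, but it is the geometric input that drives the rest of the argument.
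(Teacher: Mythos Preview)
The paper does not give its own proof of Theorem~5.2: part~(1) is quoted from \cite{Prz-1} (Corollary~1.15 there), and part~(2) is simply recorded as the specialization $n=4$. Your recursion argument is correct and is exactly the two-variable Kauffman analogue of the bracket computation the paper carries out immediately above the theorem; this is presumably how the cited result was obtained as well, so there is nothing to compare.

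One tiny remark on your final sentence: $a\ne p^{\pm1}$ is not an automatic consequence of $a^{2}=p^{2}=i$ alone (indeed $a=p$ is a solution of $a^{2}=i$); rather, it is an explicit hypothesis in part~(2). What \emph{is} automatic is that once $a\ne p$ is assumed, the other exclusion $a\ne p^{-1}$ follows, since $a^{2}=p^{2}$ forces $a=-p$, and $-p=p^{-1}$ would require $p^{2}=-1$, contradicting $p^{2}=i$.
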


We leave to readers the conclusion that the Kauffman polynomial cannot disprove 4-move conjectures. The case of colored quantum invariants 
was not, up to my knowledge, checked in all cases. Even more, invariants related to general Yang-Baxter operators (even the set theoretic ones) and 
their homology could shed light onto 4-move conjectures \cite{Jon,Tur,CES,Prz-10}.

\section{Reduction of a link $12^2_{Jab}$  of 12 crossings, $9^*.2:.2:.2$, to the Hopf link}

In November 2011 I visited Maciej Niebrzydowski at the University of Louisiana at Lafayette and we were speculating on entropic homology and 
its applications to 4-move conjectures \cite{N-P-2}. When Mietek D{\c a}bkowski visited us there, we asked him about his paper with Slavik Jablan 
and the first non-simplified link. \ 
We learned that it was an alternating 2-component link of 12 crossings, in Conway notation $9^*.2:.2:.2$ (Figure 1.1). I made then one crucial 
observation that if one performs 4-moves in three places, $m_1$, $m_2$ and $m_3$, as in Figure 6.1 one obtains a diagram representing the parallel 
2-cable of the right handed trefoil knot.  


\ \\
\centerline{\psfig{figure=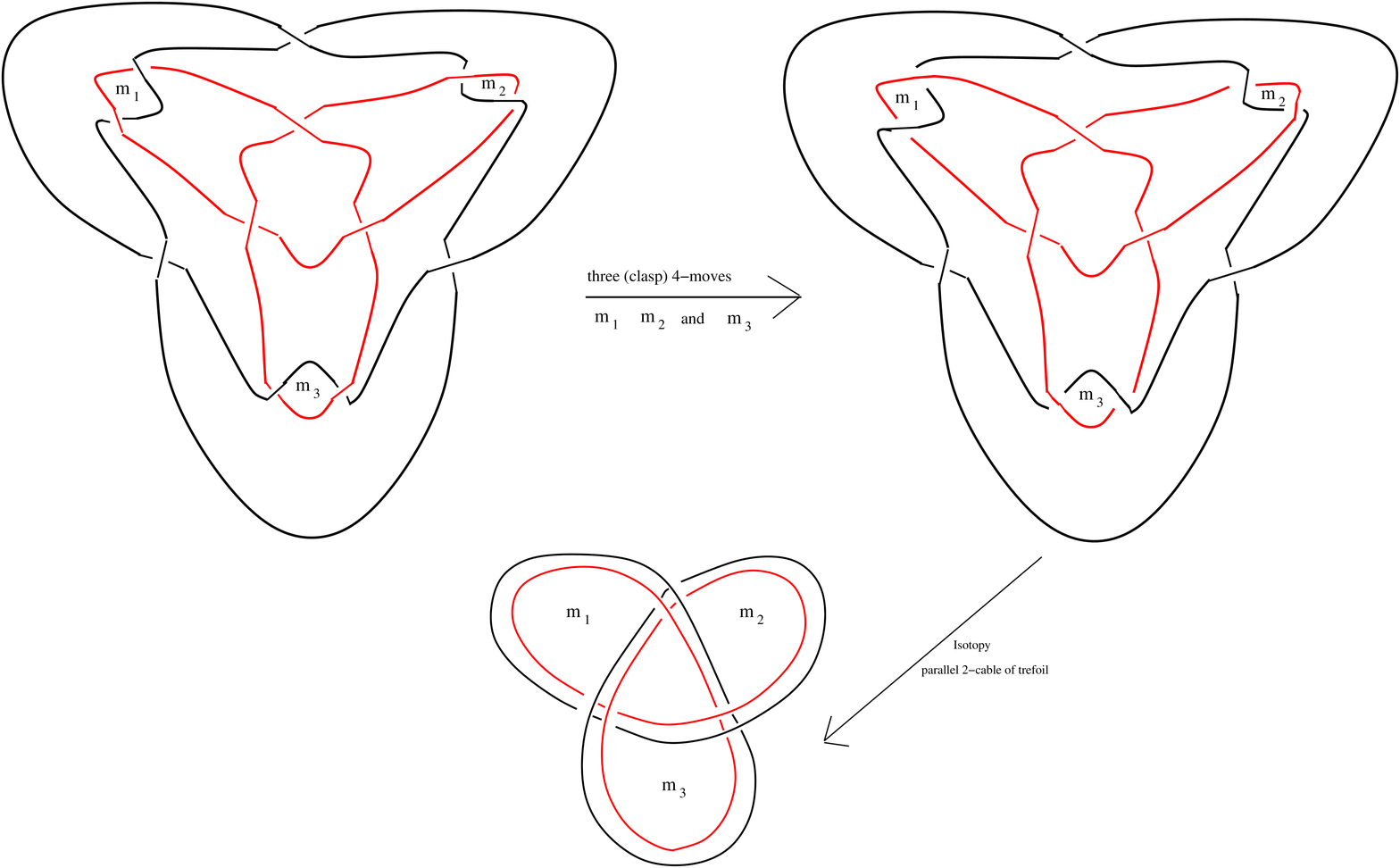,height=7.3cm}}
\centerline{Figure 6.1;  The link $12^2_{Jab}= L12a_{1388}=9^*.2:.2:.2$ changed by $4$-moves} 
\centerline{to the parallel 2-cable of the trefoil knot.}
\ \\ 

I did not notice, however, that Askitas had already given a method for the 4-move reduction of such diagrams. The reason was that I  (and authors of 
\cite{DJKS}) had the first paper of Askitas \cite{Ask-1} in which the $(2,1)$-cable of the trefoil is reduced to the trivial knot but 
we didn't have in Lafayette the second paper of Askitas \cite{Ask-2}. Only after the untimely death of Slavik in February, when I was thinking about 
writing a tribute to him  I recalled the second Askitas paper. I found it somewhere in my files (and Nikos send the picture-reduction as well, it is the 
picture we reproduce as Figure 6.4). 
And yes!, 4-move equivalence of any 2-cable of the trefoil knot to a trivial link or the Hopf link was there.
Askitas uses in a masterly way the natural reduction of Figure 3.3. He starts from an isotopy which can be interpreted as a framed Tait flype 
and then apply 4-moves (Figures 6.2 and 6.3). The move is difficult as it increases the number of crossings from 12 to 19 -- clearly 
computer couldn't find it!

Thus Figure 6.3 
shows the  framed Tait flype (in fact the red (thickened)  part is not framed in our application but the black (thinned) part is a 2-cable part):

\centerline{\psfig{figure=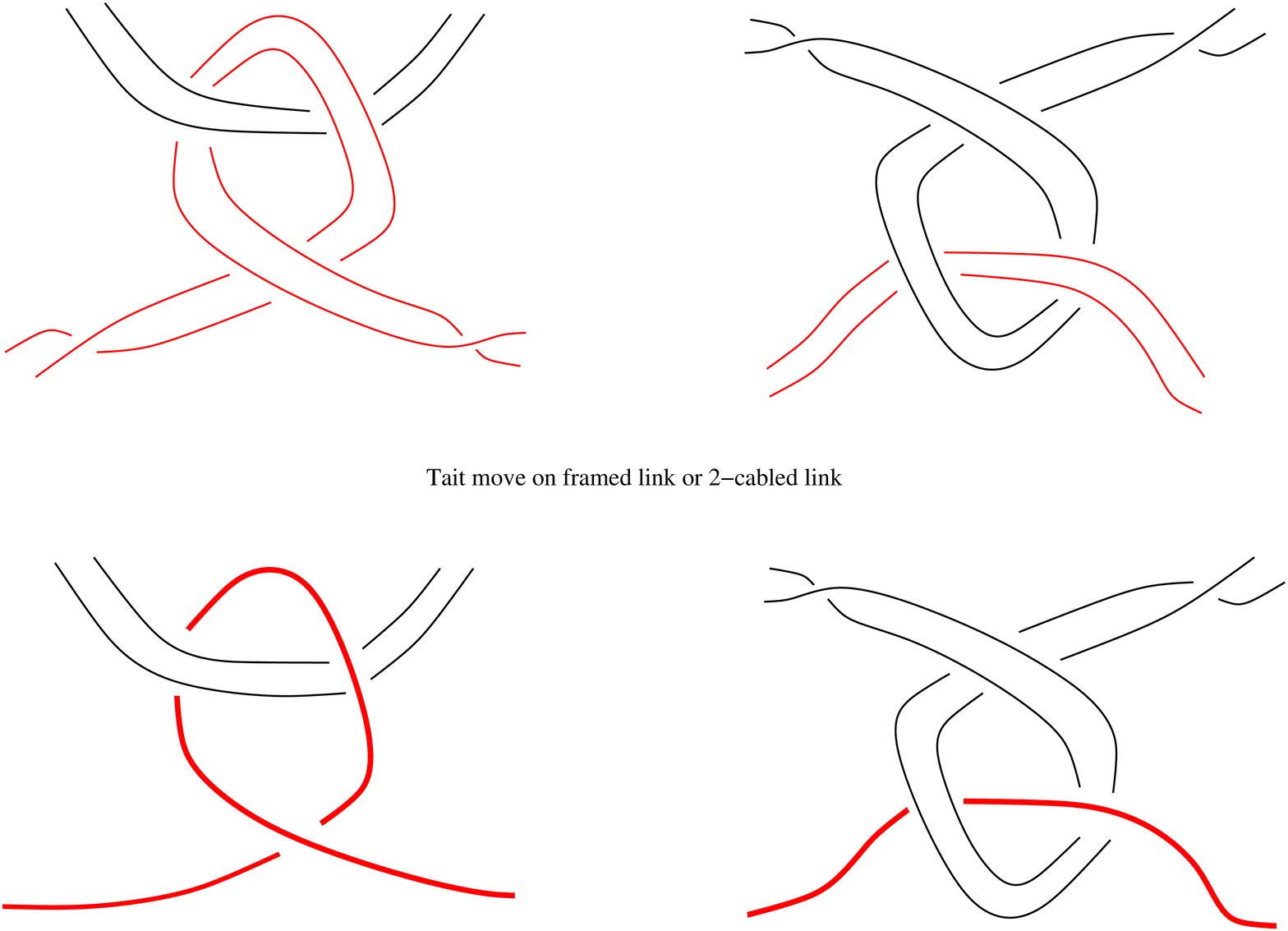,height=8.3cm}}
\centerline{Figure 6.2; Tait frame move}
\ \\
 \\
The first, difficult, step of Askitas reduction is illustrated in Figure 6.3. 

\centerline{\psfig{figure=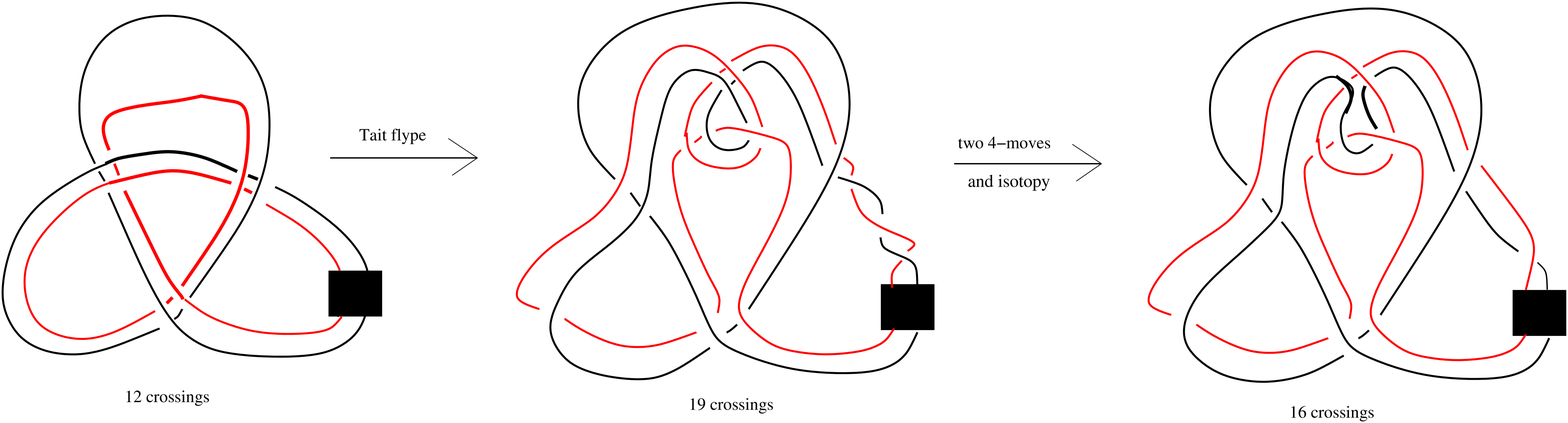,height=4.7cm}}
\ \\
\centerline{Figure 6.3;  The first steps of Askitas reduction by 4-moves}
\ \\

The full reduction is shown in Figure 6.4\footnote{I would like to thank Nikos for sending picture of his reduction.}; notice that Askitas considers
2-cable tangle of the left handed trefoil knot.

\centerline{\psfig{figure=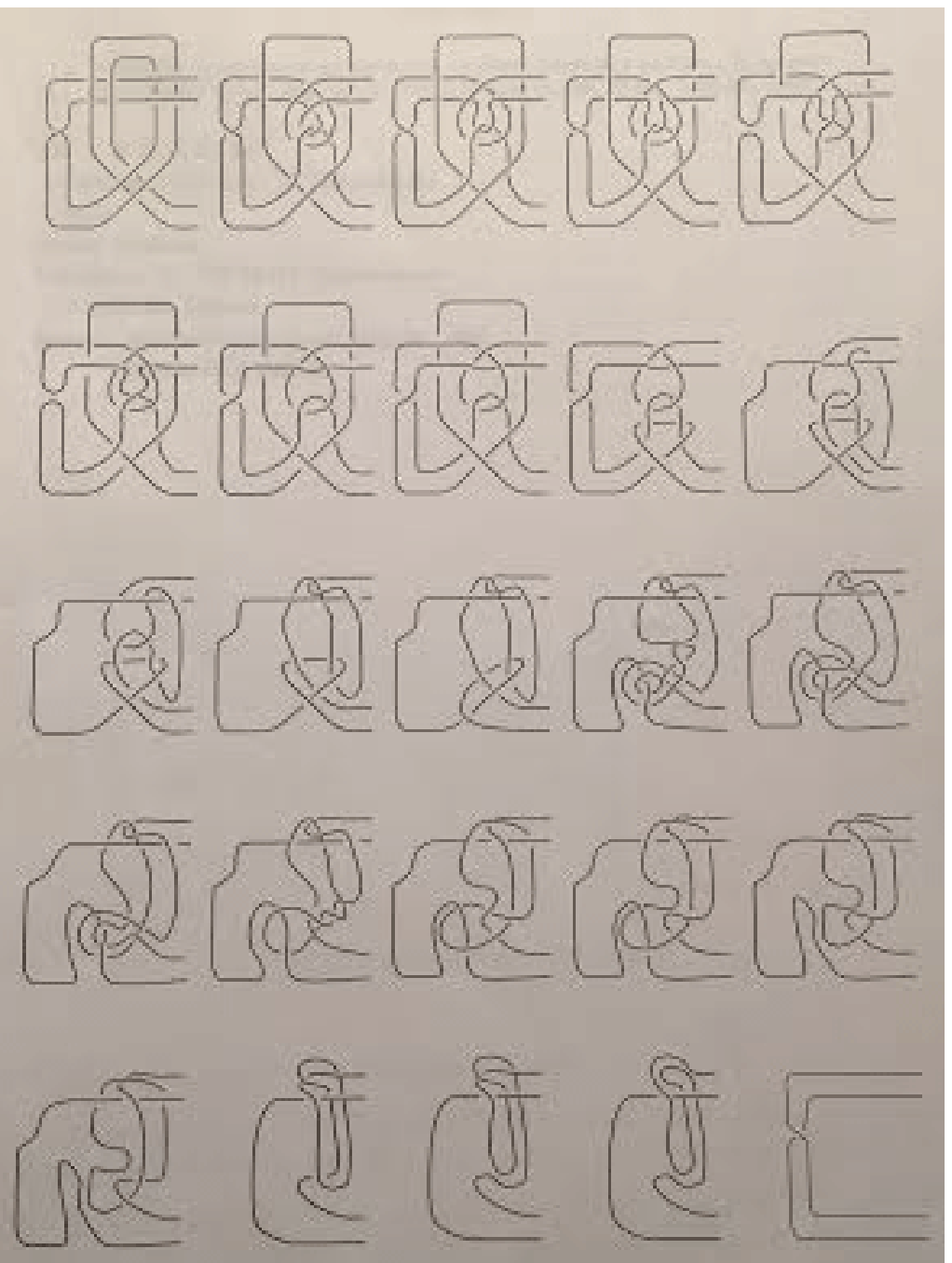,height=17.1cm}}
\ \\ \ \\
\centerline{Figure 6.4;  Tangle reduction of $2$-cable of the trefoil knot by Nikos Askitas}

\newpage
\section{Quartic skein module of links in $R^3$ and any oriented 3-manifolds}
In this section we sketch another important motivation for studying links modulo $4$-moves: we can deform moves to skein relations.

Skein relations used in skein modules of 3-dimensional manifolds are often deformations of moves on diagrams (compare \cite{Pr-Ts}).
In the case of a $4$-move its deformation should be a linear relation involving links $L_0,L_1,L_2,L_3, L_4$, and $L_{\infty}$ such that
$$x_0L_0+ x_1L_1+ x_2L_2+ x_3L_3+ x_4L_4+ x_{\infty}L_{\infty} = 0$$ for $x_0,x_1,x_2,x_3,x_4,x_{\infty}$ elements of some ring $R$, $x_0$ and $x_4$ 
invertible. It is useful to consider framed links and add framing relation $L^{(1)}=aL$ where  $L^{(1)}$ is 
obtained from a framed link $L$ by one positive twist of its framing; $a$ is an invertible element of $R$.
\begin{definition} For an oriented 3-dimensional manifold $M$ we define the quartic skein module 
$${\mathcal S}_{5,\infty}(M) = R{\mathcal L}^{fr}/(x_0L_0+ x_1L_1+ x_2L_2+ x_3L_3+ x_{4}L_{4} + x_{\infty}L_{\infty},\ L^{(1)}-aL).$$
That is we divide the free $R$-module over the ambient isotopy classes of framed unoriented links by a submodule 
generated by skein relation and framing relation. If $x_0=-x_4=1=a$, and $x_1=x_2=x_3=x_{\infty}=0$ then ${\mathcal S}_{5,\infty}(M)$ 
is a free $R$-module over the set of equivalence classes of links up to $4$-moves.
\end{definition}
We do not know much about this skein module except that it generalizes the Kauffman bracket skein module the two variable 
Kauffman skein module and the forth (cubic) skein module which was the topic of Tatsuya Tsukamoto PhD thesis, and even this only if we allow $x_4$ to 
be equal to zero. 

We can express any torus knot of type $(2,n)$, $T_{(2,n)}$, as a linear combination of trivial links and $T_{(2,2)}$ (the 
Hopf link, $H_+$).

We can also observe that because the unoriented framed left handed and right handed Hopf links are ambient isotopic ($H_-=H_+$), we have the identity
$$-(x_0+x_4)H =(a^{-1}x_1 +ax_3 +a^2x_{\infty})T_1 +x_2T_2,$$ 
where $T_n$ is the trivial framed link of $n$ components.
Thus $T_{2,n} \in span(T_1,T_2,H)$ and if $x_0+x_4$ is invertible in $R$ then $T_{2,n} \in span(T_1,T_2)$.
Similarly a careful look at Figure 2.2 should convince us that the figure eight knot, $4_1$, is in $span(T_1,T_2,H)$ and if $x_0+x_4$ is invertible, 
then $4_1\in span(T_1,T_2)$.\\
More generally (here Figure 3.2 is crucial): 
\begin{theorem}
\begin{enumerate}
\item[(1)] Every 2-algebraic tangle without a closed component is a linear combination of tangles $e_1,...,e_6$ of Figure 3.2.
\item[(2)] Every 2-algebraic knot is in $span(T_1,T_2,H)$, and if $x_0+x_4$ is invertible in $R$ then it is in $span(T_1,T_2)$ 
\end{enumerate}
\end{theorem}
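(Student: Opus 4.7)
The plan is to deform the proof of Proposition 3.2 by substituting each $\pm 4$-move reduction with an application of the skein relation. Invertibility of $x_4$ in $R$ lets us solve the skein relation as
$$L_4 = -x_4^{-1}\bigl(x_0 L_0 + x_1 L_1 + x_2 L_2 + x_3 L_3 + x_\infty L_\infty\bigr),$$
so whenever the classical proof performs a local replacement of $L_4$ by $L_0$ via a $4$-move, the skein version instead rewrites the class of the enclosing tangle as a five-term $R$-linear combination of the classes obtained by inserting $L_0, L_1, L_2, L_3, L_\infty$ in that window, with any framing discrepancy absorbed into scalars $a^{\pm k}$ via $L^{(1)} = aL$.

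For Part~(1), the argument is by induction on the inductive depth of the construction in Definition 3.1. The base case consists of the tangles with $0$ or $1$ crossing, each of which (after a framing normalisation) is one of $e_1,\ldots,e_6$. For the inductive step, assume $A$ and $B$ are already $R$-linear combinations of the $e_k$; by bilinearity of $*$ and $R$-linearity of $r$, the product $r^i(A)*r^j(B)$ is the corresponding combination of terms $r^i(e_k)*r^j(e_l)$. Since the basic tangles are chosen rotation-invariant, this collapses to the $36$ entries $e_k * e_l$ of the multiplication table of Figure 3.2. Each entry is either already basic or, as exemplified in Figure 3.3, contains a local $L_4$-window whose classical $4$-move reduction yields some basic $e_m$; the skein substitution rewrites the entry as $e_m$ plus four secondary terms from the $L_1, L_2, L_3, L_\infty$ insertions, each of which must be identified with an element of $\mathrm{span}(e_1,\ldots,e_6)$ by direct inspection---the $L_1, L_2, L_\infty$ insertions being among the basic tangles, and the $L_3$-insertion reducing by additional isotopies and framing adjustments.

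Part~(2) follows by closing. A $2$-algebraic knot is the numerator or denominator closure of a $2$-algebraic tangle without a closed component, so by Part~(1) it is an $R$-linear combination of the six closures $N(e_k)$ (or $D(e_k)$). Each of these is a link with at most two crossings---a trivial knot, a trivial two-component link, or a Hopf link (with framings absorbed by $L^{(1)} = aL$)---so every $2$-algebraic knot lies in $\mathrm{span}(T_1, T_2, H)$. When $x_0 + x_4$ is invertible, the identity
$$-(x_0 + x_4)\, H \;=\; (a^{-1} x_1 + a x_3 + a^2 x_\infty)\, T_1 + x_2\, T_2,$$
established just before the theorem as a consequence of $H_- = H_+$, solves for $H$ as an $R$-combination of $T_1$ and $T_2$, upgrading the containment to $\mathrm{span}(T_1, T_2)$.

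The chief obstacle is the combinatorial verification inside Part~(1): one must traverse the multiplication table of Figure 3.2 and check, for each entry requiring a $4$-move reduction classically, that all four secondary terms produced by the skein relation land in $\mathrm{span}(e_1,\ldots,e_6)$ after framing normalisation. The most delicate piece is the $L_3$-insertion, which is not itself basic; it has to be handled either by absorbing it into adjacent crossings of the enclosing $e_k * e_l$ or by a fresh application of the skein relation at a different four-crossing window. Tracking the scalars $a^{\pm k}$ consistently through every isotopy and every framing adjustment---and then carrying the same bookkeeping over into the closures of Part~(2) and into the Hopf-link substitution---is the most error-prone piece of the argument.
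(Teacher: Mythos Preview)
Your proposal is correct and takes essentially the same approach as the paper: deform the multiplication-table verification of Proposition~3.2 by replacing each $4$-move with the five-term skein expansion, observe that one secondary term is not immediately basic, and handle that term by a further skein application. The paper differs only in presentation---it works out the hardest entry $e_5*e_6$ explicitly, first establishing a preliminary identity for the auxiliary tangle $r(e_4*e_4)$ that appears among the five terms, which is exactly the extra step you anticipate for your non-basic secondary insertion.
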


\begin{proof} We directly generalize the proof of Proposition 4.3. The most involved case was considered in Figure 3.3, we solved this now 
in the quartic skein module ${\mathcal S}_{5,\infty}(Tangle)$, starting however from the simpler case of  $r(e_3*e_3)$:
\ \\ \ \\ \ \\ \ \\
$$-x_0\parbox{1.1cm}{\psfig{figure=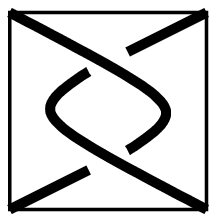,height=1.2cm}}= 
x_1 \parbox{1.1cm}{\psfig{figure=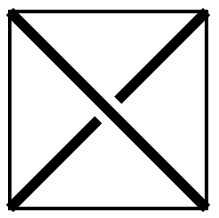,height=1.2cm}} + x_2 \parbox{1.1cm}{\psfig{figure=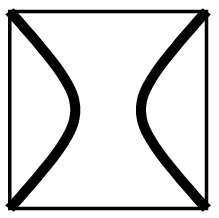,height=1.2cm}}+ 
x_3 \parbox{1.1cm}{\psfig{figure=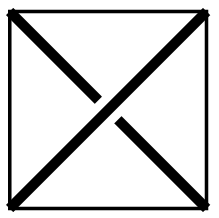,height=1.2cm}} + x_4\parbox{1.1cm}{\psfig{figure=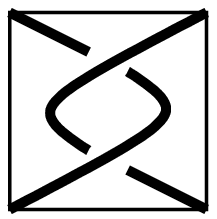,height=1.2cm}} + 
x_{\infty} \parbox{1.1cm}{\psfig{figure=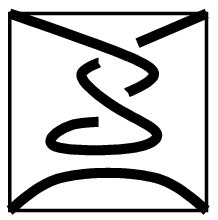,height=1.2cm}}=$$
$$x_1e_3 + x_2e_2 +x_3e_4+ x_4e_6 + a^2x_{\infty}e_1 \in span\{e_1,e_2,e_3,e_4,e_6\}$$
Now we have:
$$e_5*e_6 \stackrel{isotopy}{=}
\parbox{2.5cm}{\psfig{figure=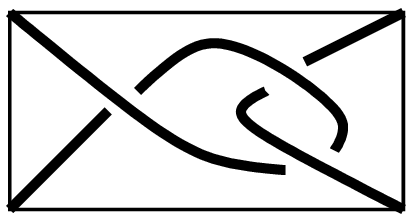,height=1.3cm}} =$$
$$x_1 \parbox{2.1cm}{\psfig{figure=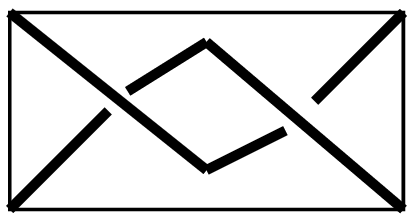,height=1.1cm}} + x_2 \parbox{2.1cm}{\psfig{figure=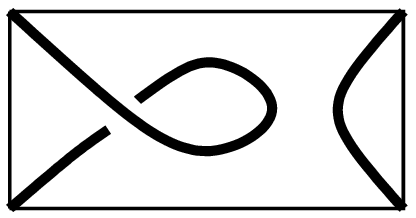,height=1.1cm}}+
x_3 \parbox{2.1cm}{\psfig{figure=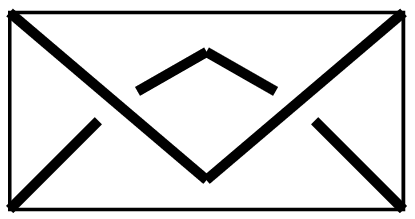,height=1.1cm}} + x_4\parbox{2.1cm}{\psfig{figure=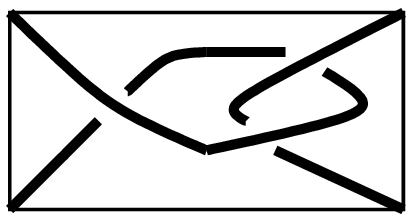,height=1.1cm}} + 
x_{\infty} \parbox{2.1cm}{\psfig{figure=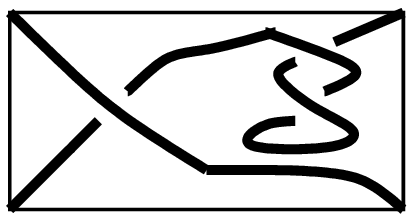,height=1.1cm}}=$$
$$x_1e_5+ a^{-1}x_2e_2+ x_3e_1+ x_4\parbox{1.1cm}{\psfig{figure=Tang-re4e4.eps,height=1.2cm}} +a^2x_{\infty}e_3\in span\{e_1,e_2,e_3,e_4,e_5,e_6\}.$$
Part (2) follows directly from Part (1).
\end{proof}
We can also generalize Theorem 4.5 to show that in the quartic skein module any two component 2-algebraic link is 
the linear combination of $T_1$, $T_2$ and the Hopf link $H$. We leave this as an exercise to the reader as
 we are straying too far from our topic, so we leave the study of the quartic skein modules for future research. I should only add that 
I did not study 3-braids in the context of the quartic skein module. Even with the Coxeter theorem, it is not checked yet that closed 3-braids are 
generated by a finite number of links in ${\mathcal S}_5(\R^3)$. We could start testing this by assuming $a_1=a_3=a_{\infty}=0$,  as in this case 
the skein relation is involving only links of the same number of components.

\section{Acknowledgements}
J.~H.~Przytycki was partially supported by the Simons Collaboration Grant-316446.

\ \\
Department of Mathematics,\\
The George Washington University,\\
Washington, DC 20052\\
e-mail: {\tt przytyck@gwu.edu},\\
and University of Gda\'nsk, Poland

\end{document}